\newcommand{\R}{\mathbb{R}}
\newcommand{\C}{\mathbb{C}}
\newcommand{\T}{\mathcal{T}}
\newcommand{\B}{\mathcal{B}}
\newcommand{\CN}{\mathcal{N}}
\newcommand{\f}{\mathbf{f}}
\newcommand{\bfu}{\mathbf{u}}
\newcommand{\TauH}{\mathcal{T}_H}
\newcommand{\longdownarrow}{\rotatebox[origin=c]{-90}{$\hspace{3pt}\longrightarrow\hspace{3pt}$}}
\newcommand{\Kf}{\mathcal{K}} 
\newcommand{\Gf}{\mathcal{G}} 
\newcommand{\Wf}{\mathbf{W}}
\newcommand{\bfv}{\mathbf{v}}
\newcommand{\bfw}{\mathbf{w}}
\newcommand{\bfpsi}{\boldsymbol{\psi}}
\newcommand{\Hcurl}{\mathbf{H}(\curl)} 
\newcommand{\Hcurlhom}{\mathbf{H}_0(\curl)} 
\newcommand{\HNedhom}{\mathring{\mathcal{N}}(\mathcal{T}_H)}
\newcommand{\HLag}{\mathcal{S}(\mathcal{T}_H)}
 \newcommand{\quotes}[1]{``#1''}
\newcommand{\id}{I}
\DeclareMathOperator{\Int}{int}
\DeclareMathOperator{\Div}{div}
\DeclareMathOperator{\curl}{curl}
\DeclareMathOperator{\Grad}{grad}
\DeclareMathOperator{\ms}{ms}
\DeclareMathOperator{\corr}{corr}
\DeclareMathOperator{\interior}{in}
\DeclareMathOperator{\out}{out}
\definecolor{dark-green}{rgb}{0.0,0.4,0.0}
\newtheorem{theorem}{Theorem}[section]
\newtheorem{lemma}[theorem]{Lemma}
\theoremstyle{definition}
\newtheorem{remark}[theorem]{Remark}
\newtheorem{conclusion}[theorem]{Conclusion} %
\newenvironment{fshaded}{%
\MakeFramed {\FrameRestore}}%
{\endMakeFramed}
\newtheorem{cstep}{Step}
\begin{document}

\begin{center}
{\LARGE Computational homogenization of time-harmonic Maxwell's equations\renewcommand{\thefootnote}{\fnsymbol{footnote}}\setcounter{footnote}{0}
 \hspace{-3pt}\footnote{The authors acknowledge the support of the Brummer\&Partners MathDataLab and P. Henning also acknowledges the support of the Swedish Research Council (grant 2016-03339) and the G\"oran Gustafsson foundation.}}\\[2em]
\end{center}

\begin{center}
{\large Patrick Henning and Anna Persson \footnote[1]{Department of Mathematics, KTH Royal Institute of Technology, SE-100 44 Stockholm, Sweden.}}\\[2em]
\end{center}

\begin{center}
{\large{\today}}
\end{center}

\begin{center}
\end{center}

\begin{abstract}
In this paper we consider a numerical homogenization technique for curl-curl-problems that is based on the framework of the Localized Orthogonal Decomposition and which was proposed in [\emph{D.~Gallistl, P.~Henning, B.~Verf\"urth. SIAM J. Numer. Anal.~56-3:1570--1596, 2018}] for problems with essential boundary conditions. The findings of the aforementioned work establish quantitative homogenization results for the time-harmonic Maxwell's equations that hold beyond assumptions of periodicity, however, a practical realization of the approach was left open. In this paper, we transfer the findings from essential boundary conditions to natural boundary conditions and we demonstrate that the approach yields a computable numerical method. We also investigate how boundary values of the source term can effect the computational complexity and accuracy. Our findings will be supported by various numerical experiments, both in $2D$ and $3D$.
\end{abstract}

{\small\textbf{Key words}: Maxwell's equations, multiscale, localized orthogonal decomposition, finite element method, a priori analysis}

\vspace{1em}

{\small \textbf{AMS subject classifications:} 35Q61, 65N30, 65N12, 78M10}

\section{Introduction}\label{sec:intro}
In recent years, the interest in so called metamaterials has been growing rapidly. These materials are typically engineered on a nano- or microscale to exhibit unusual properties, such as band gaps and negative refraction \cite{Lamacz16,Schweizer17,Lipton18}. A common example is photonic crystals, which are composed of periodic cell structures interacting with light.

To model photonic crystals and other metamaterials properly, a numerical method that can handle such heterogeneous (or multiscale) media efficiently is needed. It is well known that classical finite element methods struggle to produce good approximations in this case, unless the mesh width is sufficiently small to resolve the microscopic variations intrinsic to the problem. Indeed, the mesh typically needs to be fine enough to resolve all features in the medium/metamaterial. This leads to issues with computational cost and available memory, which can be often only overcome by so-called multiscale methods. Multiscale methods are based on solving local decoupled problems on the microscale whose solutions are used to correct the original model and turn it into a homogenized macroscale model that can be solved cheaply. In the context of electromagnetic waves in multiscale media, corresponding methods were proposed in, e.g., \cite{Cao10, HocMS18, Verfuerth2017,Verfurth18, Verfurth19,Henning16_2, Stohrer14, Stohrer16, Stohrer17}. 

In \cite{Gallistl18} a method based on the Localized Orthogonal Decomposition (LOD) framework was proposed for multiscale curl-curl-problems with essential boundary conditions. The LOD technique was first introduced in \cite{Malqvist14} for elliptic equations in the $H^1$-space and was later developed to include many other types of equations and spaces, see, e.g., \cite{Abdulle17, Brown17, Hellman16, Persson16, Persson17, Persson18, Persson18_2, Peterseim17, Malqvist17} and the references therein. The technique relies on a splitting of the solution space into a coarse and a fine part, using a suitable (quasi-) interpolation operator. The coarse part is typically a classical finite element space and the fine part contains all the details not captured by the coarse space. The basis functions of the coarse space are then enhanced by computing so called correctors in the detail space. Due to an exponential decay of these correctors, they are quasi-local and can be computed efficiently by solving small problems. The resulting space (consisting of the corrected coarse basis functions) can be used in a Galerkin approach to find approximations that exhibit superior approximation properties in the energy norm when compared with the original coarse space. These results hold without assumptions on the structure of the problem, such as periodicity, scale separation or symmetry. 

Despite the wide range of successful applications, a construction of the LOD for $\Hcurl$-conforming N{\'e}d{\'e}lec finite element spaces turned out to be very challenging. The reason was the absence of suitable projection operators that are at the heart of the method (since corrections are always constructed in the kernel of such operators). The required projections need to be computable, local, $\Hcurl$-stable, and they need to commute with exterior derivatives. With the groundbreaking work by Falk and Winther \cite{FalkWinther2014}, such a projection was finally found and paved the way for extending the LOD framework to $\Hcurl$-spaces \cite{Gallistl18}. However, so far, the construction by Falk and Winther is only explicitly available for de Rham complexes with natural boundary conditions, whereas the theoretical results in \cite{Gallistl18} are obtained for curl-curl-problems with essential boundary conditions. This discrepancy (which turns out to be a non-trivial issue) prevented a practical implementation of the $\Hcurl$-LOD until now. Hence, the main objective of this paper is to explain how the method can indeed be used for practical computations and to demonstrate its performance in numerical experiments. Here we have several goals. First, we show how the $\Hcurl$-LOD can be extended from essential boundary conditions to natural boundary conditions. This enables us use the explicitly known Falk-Winther projection constructed in \cite{FalkWinther2014}, which we used in our implementation. For the arising method we prove that linear convergence is obtained in the $\Hcurl$-norm whenever the source term fulfills $\mathbf{f}\cdot \mathbf{n} = 0$. For general right hand sides $\mathbf{f}\in \mathbf{H}(\Div)$ we prove convergence of order $\sqrt{H}$. These reduced convergence rates for general $\mathbf{f}$ originate from the trace of the error on the domain boundary, which dominates the overall accuracy of the numerical method. This contribution can only be eliminated if the source term has a vanishing normal trace on $\partial \Omega$.
In order to restore a full linear convergence for general sources $\mathbf{f}$ we propose the usage of so-called source term correctors that can be used to make the error component on $\partial \Omega$ sufficiently small. As it turns out, such source term correctors can be also a helpful tool to solve the %
curl-curl-problem with essential boundary conditions since it allows the usage of the same Falk-Winther projection as for the case of natural boundary conditions. With this, we are able to practically solve the same types of problems as considered in \cite{Gallistl18}.
All the proposed methods were implemented and we include numerical examples in both $2D$ and $3D$ to support our theoretical findings. To the authors' knowledge this is also the first time that the edge-based Falk-Winther projection was implemented and the code for the projection (using the FEniCS software \cite{Fenics12}) is available at \cite{code}. Finally, we shall also demonstrate numerically that if we use a projection that is locally not commuting with exterior derivatives, then the arising $\Hcurl$-LOD suffers from reduced convergence rates. This shows that the required features of the projection are not only artifacts of the proof-technique but are central to obtain a converging method.

Another method based on the LOD framework was recently proposed in \cite{Ren19}. A different projection is constructed by utilizing the discrete Helmholtz decomposition of the fine detail space. Numerical experiments show that the proposed projection has the localization property, but an analytical proof of this property remains open.

The paper is organized as follows. In Section~\ref{sec:problem}, the curl-curl-problem is introduced and in Section~\ref{sec:LOD:natbc} the method for natural boundary conditions is described. Essential boundary conditions are briefly discussed in Section~\ref{sec:essential_bc}. Finally, numerical experiments are presented in Section~\ref{sec:experiments}. The construction of the Falk-Winther projection and a discussion of how to implement it is left to the appendix.

\section{Problem setting and notation}
\label{sec:problem}
Given a computational domain $\Omega \subset \R^3$, we consider the curl-curl-problem of the following form. Find $\bfu: \Omega \to \C^3$ such that
\begin{equation}\label{curlcurl_eq}
\begin{alignedat}{2}
\curl(\mu \curl \bfu) + \kappa \hspace{2pt}\bfu &= \f,& \quad &\text{in } \Omega, \\
\mu \curl \bfu \times \mathbf{n} &= 0,& &\text{on } \Gamma,
\end{alignedat}
\end{equation}
where $\mathbf{n}$ is the outward unit normal to $\Gamma:=\partial\Omega$. In the context of the time-harmonic Maxwell's equations, $\bfu=\mathbf{E}$ describes the (unknown) electric field and the right hand side $\f$ is a source term related to current densities. The coefficients $\mu$ and $\kappa$ are typically complex-valued and scalar parameter functions that describe material properties. In particular, they depend on electric permittivity and the conductivity of the medium through which the electric field propagates. We indirectly assume that the propagation medium is a highly variable \quotes{multiscale medium}, i.e., the coefficients $\mu$ and $\kappa$ are rapidly varying on a fine scale. In this work we mainly focus on natural boundary conditions (or Neumann-type boundary condition) which are of the form $\mu \curl \bfu \times \mathbf{n}=0$ on $\Gamma$. In the context of Maxwell's equations this corresponds to a boundary condition for (a 90 degree rotation of) the tangential component of the magnetic field.

In order to state a variational formulation of problem \eqref{curlcurl_eq}, we introduce a few spaces. For that purpose, let $G \subseteq \R^3$ denote an arbitrary bounded Lipschitz domain. We define the space of $\mathbf{H}(\curl)$-functions on $G$ by 
$$
\mathbf{H}(\curl, G):=\{ \mathbf{v}\in L^2(G, \C^3)|\curl \mathbf{v}\in L^2(G, \C^3)\}.
$$
The space is equipped with the standard inner product 
$$
(\bfv, \bfw)_{\mathbf{H}(\curl, G)}:=(\curl\bfv, \curl\bfw)_{L^2(G)}+(\bfv, \bfw)_{L^2(G)}
$$ 
where $(\cdot, \cdot)_{L^2(G)}$ is the complex $L^2$-inner product. Furthermore, we define the space of $\mathbf{H}(\Div)$-functions by
$$
\mathbf{H}(\Div, G):=\{\bfv\in L^2(G, \C^3)|\Div \bfv\in L^2(G, \C)\}
$$
with corresponding inner product 
$$
(\cdot, \cdot)_{\mathbf{H}(\Div, G)} := (\Div\bfv, \Div\bfw)_{L^2(G)}+(\bfv, \bfw)_{L^2(G)}.
$$
The restriction of $\mathbf{H}(\Div, G)$ to functions with a zero normal trace is given by
$$
\mathbf{H}_0(\Div, G):=\{\bfv\in \mathbf{H}(\Div, G)|\hspace{3pt} \bfv\cdot \mathbf{n} \vert_{\partial G} =0\}.
$$
If we drop the dependency on $G$, we always mean the corresponding space with $G=\Omega$. With this we consider the following variational formulation of problem \eqref{curlcurl_eq}: find $\bfu \in \Hcurl$ such that
\begin{align}\label{curlcurl_weak_natural}
\B(\bfu,\bfv) := (\mu \curl \bfu, \curl \bfv)_{L^2(\Omega)} + (\kappa \bfu, \bfv)_{L^2(\Omega)} = (\f,\bfv)_{L^2(\Omega)}, \quad \mbox{for all } \bfv \in \Hcurl.
\end{align}
Observe that the natural boundary condition $\mu \curl \bfu \times \mathbf{n}=0$ on $\Gamma$ is intrinsically incorporated in the variational formulation. 

For problem \eqref{curlcurl_weak_natural} to be well-defined, we shall also make a set of assumptions on the data, which read as follows.
\begin{enumerate}[label=(A\arabic*)]
	\item $\Omega \subset \R^3$ is an open, bounded, contractible polyhedron with a Lipschitz boundary. 
	\item The coefficients fulfill $\mu \in L^{\infty}(\Omega, \R^{3 \times 3})$ and $\kappa \in L^{\infty}(\Omega, \C^{3 \times 3})$.  
	\item The source term fulfills $\f \in \mathbf{H}(\Div)$.
	\item The coefficients are such that the sesquilinear form 
$\B: \mathbf{H}(\curl)\times \mathbf{H}(\curl) \to \C$ given by \eqref{curlcurl_weak_natural}
is elliptic on $\mathbf{H}(\curl)$, i.e., there exists $\alpha>0$ such that
$$
|\B(\bfv, \bfv)|\geq \alpha\|\bfv\|^2_{\mathbf{H}(\curl)}
 \quad\text{for all }\bfv\in\mathbf{H}(\curl) .
$$
We assume that this property is independent of the computational domain $\Omega$ (i.e., we can restrict the sesquilinear form to subdomains of $\Omega$ and have still ellipticity).
\end{enumerate}
Under the above assumptions (A1)-(A4), we have that \eqref{curlcurl_weak_natural} is well-defined by the Lax-Milgram-Babu{\v{s}}ka theorem \cite{Bab70fem}. Note that none of the assumptions is very restrictive and that they are fulfilled in many physical setups (cf. \cite{Gallistl18,FR05maxwell} for discussions and examples). In \cite{Verfuerth2017} it was also demonstrated that the coercivity can be relaxed to a frequency-dependent  inf-sup-stability condition for indefinite problems. However, in order to avoid the additional technical details that would come with the indefinite setting, we will only focus on the simpler $\mathbf{H}(\curl)$-elliptic case as given by (A4).

\section{Numerical homogenization for natural boundary conditions}
\label{sec:LOD:natbc}
In this section, we follow the arguments presented in \cite{Gallistl18} for the case of essential boundary conditions (i.e., $\bfu \times \mathbf{n} = 0$ on $\Gamma$) to transfer them to the case of natural boundary conditions as given in \eqref{curlcurl_eq}.

\subsection{Basic discretization and multiscale issues}
Let us start with some basic notation that is used for a conventional discretization of \eqref{curlcurl_eq}. This setting will be also used to exemplify why such a standard discretization of multiscale curl-curl-problems is problematic.

Throughout this paper, we make the following assumptions on the discretization.
\begin{enumerate}[label=(B\arabic*)]
	\item $\TauH$ is a shape-regular and quasi-uniform partition  of $\Omega$, in particular we have $\cup\TauH=\overline{\Omega}$.
	\item The elements of $\TauH$ are (closed) tetrahedra and are such that any two distinct elements $T, T'\in \TauH$ are either disjoint or share a common vertex, edge or face.
	\item The mesh size (i.e., the maximum diameter of an element of $\TauH$) is denoted by $H$.
\end{enumerate}
On $\TauH$ we consider lowest-order $H^1$-, $\mathbf{H}(\Div)$- and $\mathbf{H}(\curl)$-conforming elements. Here, $\HLag\subset H^1(\Omega)$ denotes the space of $\TauH$-piecewise affine and continuous functions (i.e., scalar-valued first-order Lagrange finite elements) and we set $\mathring{\mathcal{S}}(\TauH):=\HLag \cap H^1_0(\Omega)$ as the subset with a zero trace on $\partial \Omega$.
The lowest order N{\'e}d{\'e}lec finite element (cf.\ \cite[Section 5.5]{Monk}) is denoted by
\[
  \CN(\TauH):=\{\bfv\in \mathbf{H}(\curl)|\forall T\in \TauH: \bfv|_T(\mathbf{x})=\mathbf{a}_T\times\mathbf{x}+\mathbf{b}_T \text{ with }\mathbf{a}_T, \mathbf{b}_T\in\C^3\}.
\]
Later on, we will also need the lowest-order Raviart--Thomas finite element space, which is given by
$$
  \mathcal{RT}(\TauH):=\{\bfv\in \mathbf{H}(\Div)|\forall T\in \TauH: \bfv|_T(\mathbf{x})=a_T\hspace{2pt}\mathbf{x}+\mathbf{b}_T \text{ with } a_T\in \C, \mathbf{b}_T\in\C^3\}.
$$
With this, a standard $\Hcurl$-conforming discretization of problem \eqref{curlcurl_weak_natural} is to find $\bfu_H \in \CN(\TauH)$ such that
\begin{align}\label{curlcurl_discrete_H_natural}
\B(\bfu_H,\bfv_H) = (\f,\bfv_H)_{L^2(\Omega)}, \quad \mbox{for all } \bfv \in \CN(\TauH).
\end{align}
As a Galerkin approximation, $\bfu_H$ is a quasi-best approximation in the $\Hcurl$-norm and fulfills the classical estimate
\begin{align}
\label{classical-estimate}
\| \bfu - \bfu_H \|_{\Hcurl} \le C
\inf_{\mathbf{v}_H \in \mathcal{N}(\TauH)} \| \bfu - \mathbf{v}_H \|_{\Hcurl} \le C H \left( \| \bfu \|_{H^1(\Omega)} + \| \curl \bfu \|_{H^1(\Omega)} \right).
\end{align}
A serious issue is faced if $\mu$ and $\kappa$ are multiscale coefficients that are rapidly oscillating with a characteristic length scale of the oscillations that is of order $\delta \ll 1$. In this case, both $\| \bfu \|_{H^1(\Omega)}$ and $\| \curl \bfu \|_{H^1(\Omega)}$ will blow up, even for differentiable coefficients $\mu$ and $\kappa$. Even worse, if $\mu$ and $\kappa$ are discontinuous, then the required regularity $\bfu, \curl \bfu  \in H^1(\Omega,\C^3)$ might not be available at all (cf. \cite{Cost90regmaxwellremark, CDN99maxwellinterface, BGL13regularitymaxwell}). In such a setting the estimate \eqref{classical-estimate} becomes meaningless, as convergence rates can become arbitrarily slow. On top of that, convergence can only be observed in the fine scale regime $H<\delta$. This imposes severe restrictions on the mesh resolution and the costs for solving \eqref{curlcurl_discrete_H_natural} can become tremendous. It is worth to mention that the picture does not change if we look at the error in the $L^2$-norm. This is because $\| \bfu - \bfu_H \|_{L^2(\Omega)}$ and $\| \bfu - \mathbf{u}_H \|_{\Hcurl}$ are typically of the same order, due to the large kernel of the curl-operator. In particular, for large mesh sizes $H$ the space $\mathcal{N}(\TauH)$ does not contain any good $L^2$-approximations of $\bfu$. This is because fast variations in $\kappa$ cause equally fast oscillations in $\bfu$ with an amplitude of order $\mathcal{O}(1)$. These oscillations cannot be captured on coarse meshes and due to their non-negligible amplitude they are crucial in the $L^2$-sense.
For periodic coefficients, the appearance of such oscillations can be explained with analytical homogenization theory (cf. \cite{Henning16_2}). An example for a function $\bfu$ that illustrates this aspect is given in Figure \ref{fig:ref_solution} below.
Note that this is a crucial difference to $H^1$-elliptic problems, where good $L^2$-approximations on coarse meshes are at least theoretically available (though they are typically not found by standard Galerkin methods). A very good review about this topic for $H^1$-elliptic problems is given in \cite{Peterseim16}.

\subsection{De Rham complex and subscale correction operators}

As discussed in the previous subsection, a conventional discretization of the curl-curl-problem \eqref{curlcurl_weak_natural} is problematic as it requires very fine meshes $\TauH$ and hence very high-dimensional discrete spaces. Motivated by this issue, the question posed in \cite{Gallistl18} is whether it is possible to construct a {\it quasi-local} sub-scale correction operator 
$\Kf$ that acts on $\Hcurl$ and which only depends on $\B(\cdot,\cdot)$ (but not on the source term $\f$), such that the original multiscale problem can be replaced by a numerically homogenized equation. By \quotes{numerically homogenized equation} we mean an equation that can be discretized with a Galerkin method in the standard (coarse) space $\mathcal{N}(\TauH)$, such that the corresponding (homogenized) solution $\bfu_H^0 \in \mathcal{N}(\TauH)$ is a good coarse scale approximation of $\bfu$ (in the dual space norm $\| \cdot \|_{\mathbf{H}(\Div)^{\prime}}$, cf. \cite{Gallistl18} for details) and such that $\bfu_H^0 + \Kf( \bfu_H^0)$ (i.e., homogenized solution plus corrector) yields a good approximation of $\bfu$ in the $\Hcurl$-norm. The (numerically) homogenized equation takes the form
$$
\B(  (I + \Kf) \bfu_H^0 , (I + \Kf)\bfv  ) = (\f , (I + \Kf)\bfv  )_{L^2(\Omega)} \qquad \mbox{for all } \bfv \in \mathcal{N}(\TauH)
$$
and provided that $\Kf$ is available, this can be solved cheaply in a coarse (and hence low-dimensional) space $\mathcal{N}(\TauH)$. In \cite{Gallistl18}, such a homogenized problem was obtained for essential boundary conditions using the technique of Localized Orthogonal Decompositions (LOD).

The central tool in the construction of a suitable corrector operator $\Kf$ is the existence of local and stable projection operators that commute with the exterior derivative. For that we apply the theory of finite element exterior calculus (FEEC), where we refer to the excellent survey papers \cite{ArnoldFalkWinther2006,ArnoldFalkWinther2010} for an introduction to the topic. A cochain complex of vector spaces is essentially a sequence of homomorphisms between these spaces with the property that the image of each of the homomorphisms is a subset of the kernel of the next homomorphism. If the vector spaces are the spaces of differential $p$-forms on some smooth manifold and if the homomorphisms are given by the exterior derivative, we call the cochain complex a {\it de Rham complex}. In the following,
we consider the $L^2$ de Rham complex with natural boundary conditions which is of the form
\begin{align*}
H^1(\Omega) \hspace{5pt} \overset{\Grad}{\longrightarrow}  \hspace{5pt} \mathbf{H}(\curl,\Omega) \hspace{5pt} \overset{\curl}{\longrightarrow}  \hspace{5pt}  
\mathbf{H}(\Div,\Omega) \hspace{5pt} \overset{\Div}{\longrightarrow}  \hspace{5pt}  L^2(\Omega).
\end{align*}
Obviously, we have for the exterior derivative $\curl \circ \Grad = \mathbf{0}$ and $\Div \circ \curl = 0$, which shows the central property of a cochain complex (that is the inclusion of the image of each homomorphism into the kernel of the next). We also consider the
corresponding finite element subcomplex (of lowest order spaces)
\begin{align*}
\HLag \hspace{5pt} \overset{\Grad}{\longrightarrow}  \hspace{5pt} \CN(\TauH) \hspace{5pt} \overset{\curl}{\longrightarrow}  \hspace{5pt}  
\mathcal{RT}(\TauH) \hspace{5pt} \overset{\Div}{\longrightarrow}  \hspace{5pt}  \mathcal{P}(\TauH),
\end{align*}
where $\mathcal{P}(\TauH)\subset L^2(\Omega)$ is the space of $\TauH$-piecewise constant functions. In this setting (and in more general form), Falk and Winther \cite{FalkWinther2014} proved the existence of stable and local projections ${\pi}_H$ between any Hilbert space of the de Rham complex and the corresponding discrete space of the finite element subcomplex, so that the following diagram commutes:
\begin{align*}
\begin{matrix}
H^1(\Omega) & \overset{\Grad}{\longrightarrow}  & \mathbf{H}(\curl,\Omega) & \overset{\curl}{\longrightarrow}  &  
\mathbf{H}(\Div,\Omega) & \overset{\Div}{\longrightarrow}  &  L^2(\Omega) \\
\longdownarrow \hspace{3pt}\mbox{\small${\pi}_H^V$}
& &\longdownarrow \hspace{3pt}\mbox{\small${\pi}_H^E$} & & \longdownarrow \hspace{3pt}\mbox{\small${\pi}_H^F$}
& &
\longdownarrow \hspace{3pt}\mbox{\small${\pi}_H^T$}\\
\HLag  & \overset{\Grad}{\longrightarrow}  & \CN(\TauH)  & \overset{\curl}{\longrightarrow}  & 
\mathcal{RT}(\TauH)  & \overset{\Div}{\longrightarrow}  & \mathcal{P}(\TauH).
\end{matrix}
\end{align*}
Here, $V$ stands for vertex, $E$ stands for edges, $F$ for faces and $T$ for tetrahedra. 
In particular (and for our setting very essential) is the commutation 
\begin{align}
\label{commuting-property-edge-node}
 \pi_H^E( \nabla v ) =  \nabla \pi_H^V( v ) \qquad \mbox{for all } v \in H^1(\Omega).
\end{align}
The operators are local (i.e., local information can only spread in small nodal environments) and they admit local stability estimates, for $T\in \TauH$, of the form
\begin{align}
\label{falk-winther-global-interpol-est}
\| {\pi}_H(v) \|_{L^2(T)} \lesssim \| v \|_{L^2(U(T))} + H | v |_{H\Lambda(U(T))}
\quad \mbox{and}
\quad | {\pi}_H(v) |_{H\Lambda(T)} \lesssim  | v |_{H\Lambda(U(T))}
\end{align}
for all $v \in H\Lambda(\Omega)$, where
$H\Lambda(\Omega)$ stands for either of the spaces $H^1(\Omega)$, $\mathbf{H}(\curl,\Omega)$ or $\mathbf{H}(\Div,\Omega)$, the semi-norm is given by $| v |_{H\Lambda} = \| d v \|_{L^2}$ where $d$ stands for the exterior derivative on $H\Lambda(\Omega)$ and ${\pi}_H$ stands for the corresponding projection operator from the commuting diagram. The constant $C>0$ in the estimate is generic and the patch $U(K)$ is a small environment of $K$ that consists of elements from $\TauH$ and which has a diameter of order $H$. We refer the reader to \cite[Theorem 2.2]{FalkWinther2014} for further details on this aspect.

The idea proposed in \cite{Gallistl18} is now to correct/enrich the conventional discrete spaces $\Lambda_H= \mbox{image}(\pi_H)$ by information from the kernel of the projections. Due to the direct and stable decomposition 
$$H\Lambda(\Omega) = \Lambda_H(\Omega) \oplus \Wf(\Omega),$$ 
where $\Wf(\Omega) := \mbox{kern}(\pi_H)$, we know that the exact solution $\bfu$ to the curl-curl-problem \eqref{curlcurl_weak_natural} can be uniquely decomposed into a coarse contribution in $\Lambda_H(\Omega) = \mathcal{N}(\TauH)$ and a fine (or detail) contribution in $\Wf:=\Wf(\Omega)=\mbox{kern}(\pi_H^E)$, i.e.,
\begin{align}
\label{preliminary-decomp}
\bfu = \pi_H^E(\bfu) + (\bfu - \pi_H^E(\bfu)  ).
\end{align}
In order to identify and characterize the components of this decomposition, a {\it Green's corrector operator} 
\begin{align*}
\Gf: \mathbf{H}(\curl)^{\prime} \rightarrow \Wf
\end{align*}
can be introduced. For $\mathbf{F} \in \mathbf{H}(\curl)^\prime$ it is defined by the equation
\begin{align}
\label{cor-greens-op}
\B(\hspace{2pt}\Gf(\mathbf{F}) \hspace{2pt} , \bfw )=\mathbf{F}(\bfw)\qquad \mbox{for all } \bfw\in \Wf.
\end{align}
It is well-defined by the Lax-Milgram-Babu{\v{s}}ka theorem due to assumption (A4) and since $\Wf$ is a closed subspace as the kernel of a $\Hcurl$-stable projection. The main feature of the Green's corrector is that it allows us to characterize the $\Hcurl$-stable decomposition of the exact solution $\bfu\in\Hcurl$ to problem \eqref{curlcurl_weak_natural} as
\begin{align}
\label{decomposition-exact-solution}
\bfu = \bfu_H - (\Gf \circ \mathcal{L})(\bfu_H) + \Gf(\f),
\end{align}
where $\bfu_H:=\pi_H^E(\bfu) \in \mathcal{N}(\TauH)$ is the coarse part. The operator $\mathcal{L}$ is the differential operator associated with $\B(\cdot,\cdot)$, i.e., $\mathcal{L}(\bfv) :=\B(\bfv , \cdot)$.

To see that \eqref{decomposition-exact-solution} holds, we start from the unique decomposition \eqref{preliminary-decomp} and insert it into \eqref{curlcurl_weak_natural}, where we restrict the test functions to $\Wf\subset \Hcurl$. This yields
\begin{align*}
\B(  \bfu - \pi_H^E(\bfu) , \bfw )= - \B(   \pi_H^E(\bfu)  , \bfw ) + (\f, \bfw)_{L^2(\Omega)} 
= - \B( (\Gf \circ \mathcal{L})(\bfu_H) , \bfw ) + \B( \Gf(\f) , \bfw )
\end{align*}
for all $\bfw \in \Wf$. Since both $\bfu - \pi_H^E(\bfu)$ and $\Gf(\f) - (\Gf \circ \mathcal{L})(\bfu_H)$ are elements of $\Wf$, they must be identical, proving \eqref{decomposition-exact-solution}.

\subsection{Idealized multiscale method for curl-curl-problems}
In the last subsection we saw that the exact solution can be decomposed into $\bfu = \bfu_H - (\Gf \circ \mathcal{L})(\bfu_H) + \Gf(\f)$, where $\bfu_H:=\pi_H^E(\bfu)$. This motivates to define the ideal corrector operator $\Kf: \mathcal{N}(\TauH) \rightarrow \Wf$ as 
$$
\Kf :=  - \Gf \circ \mathcal{L}
$$
and to consider a numerical homogenized equation of the form: find $\bfu_H^0 \in \mathcal{N}(\TauH)$ such that
\begin{align}
\label{ideal-LOD-form}
\B(  (I + \Kf) \bfu_H^0 , (I + \Kf)\bfv  ) = (\f , (I + \Kf)\bfv  )_{L^2(\Omega)} \qquad \mbox{for all } \bfv \in \mathcal{N}(\TauH).
\end{align}
Since this is a Galerkin approximation, C\'ea's lemma implies that $(I + \Kf) \bfu_H^0$ is an $\Hcurl$-quasi best approximation of $\bfu$ in $(I + \Kf)\mathcal{N}(\TauH)$ and hence
\begin{align*}
\| \bfu -   (I + \Kf) \bfu_H^0 \|_{\Hcurl} \lesssim
\| \bfu -   (I + \Kf) \bfu_H \|_{\Hcurl} \overset{\eqref{decomposition-exact-solution}}{=} \| \Gf(\f) \|_{\Hcurl}.
\end{align*}
In fact, if $\mu$ and $\kappa$ are self-adjoint, it can be shown that $\bfu_H^0=\pi_H^E(\bfu)$ and hence the error has the exact characterization $\bfu -   (I + \Kf) \bfu_H^0=\Gf(\f)$ (this can be proved analogously to \cite[Theorem 10]{Gallistl18}).

As we just saw, in order to quantify the discretization error $\mathbf{e}_H=\bfu -   (I + \Kf) \bfu_H^0$, all we have to do is to estimate $\Gf(\f)$. Since $\Omega$ is a contractible domain, we know that $\Gf(\f) \in\Hcurl$ admits a regular decomposition (cf. \cite{Hiptmair2002,HiptmairXu2007}) of the form
\begin{align}
\label{regular-decomposition}
\Gf(\f) = \mathbf{z} + \nabla \theta, \qquad \mbox{where } \enspace \mathbf{z}\in [H^1(\Omega)]^3 \enspace \mbox{ and } \enspace \theta \in H^1(\Omega),
\end{align}
and with
\begin{align}
\label{stabiliy-regular-decomposition}
\|  \mathbf{z} \|_{H^1(\Omega)} + \| \theta \|_{H^1(\Omega)} \lesssim \| \Gf(\f) \|_{\Hcurl}.
\end{align}
Exploiting the commuting property of the Falk-Winther projections, we obtain that we can write $\Gf(\f) \in \Wf$ as
\begin{align}
\label{identity-Greens-corrector-f}
\Gf(\f) = \Gf(\f) - \pi_H^E(\Gf(\f))
\overset{\eqref{commuting-property-edge-node}}{=} (\mathbf{z}  - \pi_H^E(\mathbf{z})) + \nabla ( \theta - \pi_H^V(\theta)).
\end{align}
By the Bramble-Hilbert lemma together with the local stability estimates \eqref{falk-winther-global-interpol-est} we easily see that for any $T \in \TauH$ it holds 
\begin{align}\label{falk-winther-global-interpol-est-decop-1}
\| \mathbf{z}  - \pi_H^E(\mathbf{z}) \|_{L^2(T)} \le C H \| \mathbf{z} \|_{H^1(U(T))} 
\end{align}
and analogously
\begin{align}\label{falk-winther-global-interpol-est-decop-2}
\|  \theta - \pi_H^V(\theta) \|_{L^2(T)} \le C H \|  \theta \|_{H^1(U(T))}.
\end{align}
Again, $U(T)$ is a generic environment of $T$ that consists of few (i.e., $\mathcal{O}(1)$) elements from $\TauH$. Observe that this yields a local regular decomposition of interpolation errors in the spirit of Sch\"oberl (cf. \cite[Theorem 1]{Sch08aposteriori} and \cite[Lemma 4]{Gallistl18}).

Next, we use the identity \eqref{identity-Greens-corrector-f} in the definition of the Green's corrector $\Gf$ to obtain
\begin{align}
\label{ideal-method-est-0}\alpha \| \Gf(\f) \|_{\Hcurl}^2 &\le \B( \Gf(\f) , \Gf(\f) )
= (\f , \mathbf{z}  - \pi_H^E(\mathbf{z}) ) + (\f , \nabla ( \theta - \pi_H^V(\theta)) )_{L^2(\Omega)} \\
\nonumber&= (\f , \mathbf{z}  - \pi_H^E(\mathbf{z}) ) - ( \Div \f , \theta - \pi_H^V(\theta) )_{L^2(\Omega)} 
+ ( \f \cdot \mathbf{n}, \theta - \pi_H^V(\theta) )_{L^2(\Gamma)}. 
\end{align}
Using the interpolation error estimates \eqref{falk-winther-global-interpol-est-decop-1} and \eqref{falk-winther-global-interpol-est-decop-2} for the Falk-Winther projections, we immediately have
\begin{align}
\label{ideal-method-est-1}
(\f , \mathbf{z}  - \pi_H^E(\mathbf{z}) ) 
\lesssim H \| \f \|_{L^2(\Omega)} \| \mathbf{z} \|_{\Hcurl} \overset{\eqref{stabiliy-regular-decomposition}}{\lesssim}
H \| \f \|_{L^2(\Omega)}  \| \Gf(\f) \|_{\Hcurl}.
\end{align}
and
\begin{align}
\label{ideal-method-est-2}
- ( \Div \f , \theta - \pi_H^V(\theta) )_{L^2(\Omega)}
\lesssim H \|  \Div \f \|_{L^2(\Omega)} \| \theta \|_{H^1(\Omega)} \overset{\eqref{stabiliy-regular-decomposition}}{\lesssim}
H \|  \Div \f  \|_{L^2(\Omega)}  \| \Gf(\f) \|_{\Hcurl}.
\end{align}
It remains to estimate the last term. For that, let $\mathcal{F}_{H,\Gamma}$ denote the set of faces (from our triangulation) that are located on the domain boundary $\Gamma$. We obtain with the local trace inequality
\begin{eqnarray}
\nonumber\lefteqn{| ( \f \cdot \mathbf{n}, \theta - \pi_H^V(\theta) )_{L^2(\Gamma)}|
 \le \sum_{S\in \mathcal{F}_{H,\Gamma}}  \| \f \cdot \mathbf{n} \|_{L^2(S)}
 \| \theta - \pi_H^V(\theta)  \|_{L^2(S)} } \\
\nonumber &\le& \| \f \cdot \mathbf{n} \|_{L^2(\Gamma)}
 \left(  \sum_{S \in \mathcal{F}_{H,\Gamma}} \| \theta - \pi_H^V(\theta)  \|_{L^2(S)}^2 \right)^{1/2} \\
\nonumber &\lesssim&  \| \f \cdot \mathbf{n} \|_{L^2(\Gamma)} 
  \left(  \underset{T\cap \Gamma \in \mathcal{F}_{H,\Gamma}}{\sum_{T \in \TauH}} H^{-1} \| \theta - \pi_H^V(\theta)  \|_{L^2(T)}^2 
  + H  \| \nabla \theta - \nabla \pi_H^V(\theta)  \|_{L^2(T)}^2  \right)^{1/2} \\
\nonumber  &\overset{\eqref{falk-winther-global-interpol-est},\eqref{falk-winther-global-interpol-est-decop-2}}{\lesssim}&
   \| \f \cdot \mathbf{n} \|_{L^2(\Gamma)} 
  \left(  \underset{T\cap \Gamma \in \mathcal{F}_{H,\Gamma}}{\sum_{T \in \TauH}} H \| \nabla \theta \|_{L^2(U(T))}^2 
  + H  \| \nabla \theta \|_{L^2(U(T))}^2  \right)^{1/2}\\
 \label{ideal-method-est-3} &\overset{\eqref{stabiliy-regular-decomposition}}{\lesssim}& \sqrt{H}  \hspace{2pt} \| \f \cdot \mathbf{n} \|_{L^2(\Gamma)}   \| \Gf(\f) \|_{\Hcurl}.
\end{eqnarray}
Here we used the regularity of the mesh $\TauH$ and that each $T$-neighborhood $U(T)$ contains only $\mathcal{O}(1)$ elements. 
Combining the estimates \eqref{ideal-method-est-0}, \eqref{ideal-method-est-1}, \eqref{ideal-method-est-2} and \eqref{ideal-method-est-3}, we obtain the following conclusion.
\begin{conclusion}
\label{conclusion-estimate-ideal-method}
Let $\bfu \in \Hcurl$ denote the exact solution to curl-curl-problem \eqref{curlcurl_discrete_H_natural} with natural boundary conditions and let $\bfu_H^0 \in \mathcal{N}(\TauH)$ denote the solution to ideal numerically homogenized equation \eqref{ideal-LOD-form}, then it holds the error estimate
\begin{align*}
\| \bfu -   (I + \Kf) \bfu_H^0 \|_{\Hcurl} \lesssim H \hspace{2pt} \| \f \|_{\mathbf{H}(\Div)} + \sqrt{H} \hspace{2pt} \| \f \cdot \mathbf{n} \|_{L^2(\Gamma)}.
\end{align*}
In particular, if $\f \in \mathbf{H}_0(\Div)$ (i.e., it has a vanishing normal trace), then we have optimal order convergence with
\begin{align*}
\| \bfu -   (I + \Kf) \bfu_H^0 \|_{\Hcurl} \lesssim H \hspace{2pt} \| \f \|_{\mathbf{H}(\Div)}.
\end{align*}
\end{conclusion}
From Conclusion \ref{conclusion-estimate-ideal-method} we see that in general we expect the error to be dominated by the term $\sqrt{H} \hspace{2pt} \| \f \cdot \mathbf{n} \|_{L^2(\Gamma)}$. Note that in the case of essential boundary conditions, the problematic term $( \f \cdot \mathbf{n}, \theta - \pi_H^V(\theta) )_{L^2(\Gamma)}$ will always vanish since the corresponding regular decomposition guarantees that $\theta = 0$ on $\Gamma$ (cf. \cite{Hiptmair2002,Sch08aposteriori}).

Before we discuss how to restore the linear convergence for source terms with arbitrary normal trace, we need to discuss how to localize the corrector operator $\mathcal{K}$.

\subsection{Localized multiscale method for curl-curl-problems}
\label{subsection:LOD_Hcurl}
From a computational point of view, the ideal multiscale method \eqref{ideal-LOD-form} is not feasible, as it requires the computation of global correctors $\mathcal{K}(\bfpsi_{E})$ for each nodal basis function $\bfpsi_E$ of the N{\'e}d{\'e}lec space $\mathcal{N}(\TauH)$. The equation that describes a corrector  $\mathcal{K}(\bfpsi_{E}) \in \Wf$ reads
\begin{align*}
\B ( \hspace{2pt} \mathcal{K}(\bfpsi_{E}) \hspace{2pt}, \bfw ) = - \B ( \bfpsi_E , \bfw ) \qquad \mbox{for all } \bfw \in \Wf.
\end{align*}
This problem has the same computational complexity as the original curl-curl-problem. Solving it for every basis function would hence multiply the original computing costs. To avoid this issue, we first split the global corrector $\mathcal{K}$ into a set of element correctors $\mathcal{K}_T$. For a tetrahedron $T\in \TauH$, the corresponding element corrector $\mathcal{K}_T(\bfv_H) \in \Wf$ is defined as 
\begin{align}
\label{ideal-element-corrector}
\B ( \hspace{2pt} \mathcal{K}_{T}(\bfv_H) \hspace{2pt}, \bfw ) = - \B_T ( \bfv_H , \bfw ) \qquad \mbox{for all } \bfw \in \Wf,
\end{align}
where $\B_T(\bfv ,\bfw ):=(\mu \curl \bfv ,\curl \bfw )_{L^2(T)} + (\kappa \hspace{2pt} \bfv , \bfw )_{L^2(T)}$. Obviously, we have 
\begin{align}
\label{relation-corrector-element-correctors}
\Kf(\bfv_H) = \sum_{T\in \TauH} \mathcal{K}_{T}(\bfv_H).
\end{align}
The advantage of this formulation is that we can now exploit the strong localization of the element correctors $\mathcal{K}_{T}$. In fact, they show an extremely fast decay to zero away from the element $T$. The decay can be quantified and is exponential in units of $H$. This is a consequence of the local support of the source term (i.e., $\B_T(\cdot,\cdot)$) and the fact that $\Kf_T$ maps into the kernel of the Falk-Winther projection. This allows to restrict the computational domain in \eqref{ideal-element-corrector} to small environments $\textup{N}^m(T)$ of $T$ that have a diameter of order $m\cdot H$. The decay was first proved in \cite[Theorem 14]{Gallistl18} for the case of essential boundary conditions (i.e., for $\mathcal{K}(\bfpsi_{E}) \times \mathbf{n} = 0$ on $\Gamma$). For natural boundary conditions we can proceed similarly (with minor modifications) to obtain the following result.
\begin{lemma}\label{lemma:approximation-of-correctors}
Given $T\in \TauH$, we define the (open) $m$-layer neighborhood recursively by
\begin{align*}
\textup{N}^0(T):=\Int(T) \qquad \mbox{and} \qquad \textup{N}^m(T):=
\Int\left(\bigcup\hspace{2pt}\{K\in \TauH \hspace{2pt}| \hspace{2pt}K\cap\overline{\textup{N}^{m-1}(T)}\neq \emptyset\}\right).
\end{align*}
On these patches (for $m>0$) we define the truncated element correctors 
$$
\mathcal{K}_{T,m}(\bfv_H) \in \Wf( \hspace{2pt}\textup{N}^m(T) \hspace{2pt})
:= \{ \bfw \in \Wf | \hspace{2pt}  \bfw \equiv 0 \hspace{4pt}\mbox{\rm in } \Omega \setminus \textup{N}^m(T) \}
$$
as solutions to 
\begin{align}
\label{truncated-element-corrector}
\B ( \hspace{2pt} \mathcal{K}_{T,m}(\bfv_H) \hspace{2pt}, \bfw ) = - \B_T ( \bfv_H , \bfw ) \qquad \mbox{for all } \bfw \in \Wf( \hspace{2pt}\textup{N}^m(T) \hspace{2pt})
\end{align}
For $\bfv_H \in \mathcal{N}(\TauH)$, an approximation of the global corrector is given (in the spirit of \eqref{relation-corrector-element-correctors}) by
$$
\mathcal{K}_{m}(\bfv_H):=\sum_{T\in\TauH} \mathcal{K}_{T,m}(\bfv_H).
$$
The error between the truncated approximation $\mathcal{K}_{m}$ and the ideal corrector $\mathcal{K}$ can be bounded by
\begin{align*}
\| \mathcal{K}(\bfv_H) - \mathcal{K}_{m}(\bfv_H) \|_{\Hcurl} \le C \rho^m \| \bfv_H \|_{\Hcurl},
\end{align*}
where $0<\rho<1$ and $C>0$ are generic constants that are independent of $m$, $H$ or the speed of the variations of $\mu$ and $\kappa$. 
\end{lemma}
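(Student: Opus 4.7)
The plan is to adapt the exponential decay proof of \cite[Theorem 14]{Gallistl18}, originally carried out for essential boundary conditions, to the present setting. The key point is that the Falk--Winther projection for natural boundary conditions from \cite{FalkWinther2014} satisfies the very same local stability estimate \eqref{falk-winther-global-interpol-est} and commuting property \eqref{commuting-property-edge-node} as its essential-boundary counterpart, and these two properties, together with coercivity (A4), are the only features of $\pi_H^E$ that drive the argument.

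Fix $T \in \TauH$ and set $\mathbf{q} := \Kf_T(\bfv_H) \in \Wf$ and $\mathbf{q}_m := \Kf_{T,m}(\bfv_H) \in \Wf(\textup{N}^m(T))$. Subtracting the defining equations \eqref{ideal-element-corrector} and \eqref{truncated-element-corrector} yields the Galerkin orthogonality $\B(\mathbf{q} - \mathbf{q}_m, \bfw) = 0$ for all $\bfw \in \Wf(\textup{N}^m(T))$. Together with coercivity and C\'ea's lemma this reduces the task to proving the off-patch decay
$$\| \mathbf{q} \|_{\Hcurl(\Omega \setminus \textup{N}^m(T))} \le C \rho^m \| \bfv_H \|_{\Hcurl(T)}.$$
To establish this, I introduce a cut-off $\eta \in \HLag$ with $\eta \equiv 0$ on $\textup{N}^{m-k-1}(T)$, $\eta \equiv 1$ outside $\textup{N}^{m-k}(T)$ and $\| \nabla \eta \|_{L^\infty} \lesssim H^{-1}$, where $k$ is a fixed small buffer. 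Since the source in the defining equation for $\mathbf{q}$ is supported in $T$, we have $\B(\mathbf{q}, \widetilde{\bfw}) = 0$ for every $\widetilde{\bfw} \in \Wf$ whose support is disjoint from $T$. Applying this to $\widetilde{\bfw} := (I - \pi_H^E)(\eta \mathbf{q}) \in \Wf$, whose support avoids $T$ for $m$ sufficiently large thanks to the locality of $\pi_H^E$, and expanding $\B(\eta \mathbf{q}, \eta \mathbf{q})$ through the splitting $\eta \mathbf{q} = \widetilde{\bfw} + \pi_H^E(\eta \mathbf{q})$ while handling the commutator $\curl(\eta \mathbf{q}) = \eta \curl \mathbf{q} + \nabla \eta \times \mathbf{q}$ by standard product-rule manipulations, one extracts a Caccioppoli-type recursion
$$\| \mathbf{q} \|_{\Hcurl(\Omega \setminus \textup{N}^m(T))}^2 \le \widetilde{C}\, \| \mathbf{q} \|_{\Hcurl(\textup{N}^{m-k}(T) \setminus \textup{N}^{m-k-2}(T))}^2.$$
The commuting property \eqref{commuting-property-edge-node} is essential here to absorb the gradient contribution of $\pi_H^E(\eta \mathbf{q})$, exploiting that $\mathbf{q} \in \Wf$ implies $\pi_H^E(\mathbf{q}) = 0$. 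A hole-filling bootstrap then produces $\| \mathbf{q} \|_{\Hcurl(\Omega \setminus \textup{N}^m(T))}^2 \le (1 - \tilde{c}) \| \mathbf{q} \|_{\Hcurl(\Omega \setminus \textup{N}^{m-k-2}(T))}^2$ with $\tilde{c} \in (0,1)$, which iterates to an exponential decay in $m$.

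In the final step I sum the per-element bounds $\| \Kf_T(\bfv_H) - \Kf_{T,m}(\bfv_H) \|_{\Hcurl} \lesssim \rho^m \| \bfv_H \|_{\Hcurl(T)}$ via the identity $\Kf(\bfv_H) - \Kf_m(\bfv_H) = \sum_{T \in \TauH} (\Kf_T(\bfv_H) - \Kf_{T,m}(\bfv_H))$. A Cauchy--Schwarz argument exploiting the finite-overlap property of the patches $\textup{N}^m(T)$ produces a polynomial-in-$m$ prefactor that is absorbed by a minor decrease of $\rho$. The main obstacle I anticipate is the cut-off step: the test function $(I - \pi_H^E)(\eta \mathbf{q})$ must simultaneously lie in $\Wf$, have support disjoint from $T$, and admit an $\Hcurl$-norm controlled solely by the energy of $\mathbf{q}$ on the thin annulus where $\eta$ transitions. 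All three properties are delicate and hinge on the combined package of locality, $\Hcurl$-stability, and the commuting diagram for the Falk--Winther operator adapted to natural boundary conditions.
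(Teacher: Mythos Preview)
Your proposal is correct and follows exactly the route the paper takes: the paper does not write out a proof but states that one proceeds as in \cite[Theorem~14]{Gallistl18} with minor modifications for natural boundary conditions, and your sketch is precisely that adaptation, resting on the local stability \eqref{falk-winther-global-interpol-est}, the commuting property \eqref{commuting-property-edge-node}, and coercivity (A4). The cut-off/Caccioppoli recursion, hole-filling, and finite-overlap summation you outline are the standard ingredients of that argument, and you have correctly identified the delicate point (control of $(I-\pi_H^E)(\eta\mathbf{q})$ on the annulus) as the place where all three properties of the Falk--Winther operator are simultaneously needed.
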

\begin{remark}[Boundary conditions of the truncated element correctors]
Observe that the truncated element correctors $\mathcal{K}_{T,m}(\bfv_H)$ given by \eqref{truncated-element-corrector} have mixed boundary conditions. On $\partial \textup{N}^m(T) \setminus \Gamma$ they fulfill essential boundary conditions, i.e., $\mathcal{K}_{T,m}(\bfv_H) \times \mathbf{n} =0$, and on $\partial \textup{N}^m(T) \cap \Gamma$ they fulfill natural boundary conditions, i.e., $\mu \curl \mathcal{K}_{T,m}(\bfv_H) \times \mathbf{n}=0$.
This is in contrast to the ideal element correctors given by \eqref{ideal-element-corrector} which only involve natural boundary conditions.
\end{remark}
From Lemma \ref{lemma:approximation-of-correctors} we see that that we can approximate the global corrector $\mathcal{K}$ efficiently by computing a set of element correctors $\mathcal{K}_{T,m}$ with a small local support $\textup{N}^m(T)$. With this we can formulate the following main theorem on the overall accuracy of localized multiscale scheme (which belongs to the class of LOD methods, cf. \cite{Gallistl18,Henning14,Malqvist14}).
\begin{theorem}\label{theorem-local-LOD}
Let $\bfu \in \Hcurl$ denote the exact solution to the curl-curl-problem with natural boundary conditions. Furthermore, let $\mathcal{K}_m$ denote the corrector approximation from Lemma \ref{lemma:approximation-of-correctors} and let $\bfu_{H,m} \in \mathcal{N}(\TauH)$ denote the unique solution to the numerically homogenized equation
\begin{align*}
\B( (I + \Kf_m ) \bfu_{H,m} , (I + \Kf_m ) \bfv_H ) = (\f , (I + \Kf_m )\bfv_H  )_{L^2(\Omega)}
\qquad \mbox{for all }  \bfv_H \in \mathcal{N}(\TauH).
\end{align*} 
Then the following error estimates holds
\begin{align}
\label{full-error-estimate-natbou}
\| \bfu - (\bfu_{H,m} + \mathcal{K}_m(\bfu_{H,m})) \|_{\Hcurl} \lesssim
(H + \rho^m) \hspace{2pt} \| \f \|_{\mathbf{H}(\Div)} + \sqrt{H} \hspace{2pt} \| \f \cdot \mathbf{n} \|_{L^2(\Gamma)}.
\end{align}
In particular, if $m\gtrsim |\log(H)|/|\log(\rho)|$ and $\f \in \mathbf{H}_0(\Div)$, then we have optimal order convergence in $H$, i.e., it holds
\begin{align*}
\| \bfu - (\bfu_{H,m} + \mathcal{K}_m(\bfu_{H,m})) \|_{\Hcurl} \lesssim
H \hspace{2pt} \| \f \|_{\mathbf{H}(\Div)} .
\end{align*}
\end{theorem}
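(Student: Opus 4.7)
My plan is to reduce Theorem 3.3 to the two results that are already available: the ideal-method error bound in Conclusion 3.1 and the corrector localization estimate in Lemma 3.2. Writing $\bfu^{\ms} := (I + \Kf)\bfu_H^0$ and $\bfu^{\ms,m} := (I + \Kf_m)\bfu_{H,m}$, the triangle inequality gives
\begin{align*}
\|\bfu - \bfu^{\ms,m}\|_{\Hcurl} \le \|\bfu - \bfu^{\ms}\|_{\Hcurl} + \|\bfu^{\ms} - \bfu^{\ms,m}\|_{\Hcurl}.
\end{align*}
The first term is controlled directly by Conclusion 3.1 and contributes the $H\|\f\|_{\mathbf{H}(\Div)} + \sqrt{H}\|\f\cdot\mathbf{n}\|_{L^2(\Gamma)}$ portion of the bound. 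The bulk of the work is therefore to show that the localization error contributes only $\rho^m \|\f\|_{\mathbf{H}(\Div)}$.

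For the localization error I would add and subtract $(I+\Kf_m)\bfu_H^0$ to split
\begin{align*}
\bfu^{\ms} - \bfu^{\ms,m} \;=\; (\Kf - \Kf_m)(\bfu_H^0) \;+\; (I+\Kf_m)(\bfu_H^0 - \bfu_{H,m}).
\end{align*}
The first piece is bounded directly by Lemma 3.2 as $\rho^m\|\bfu_H^0\|_{\Hcurl}$, and stability of the ideal scheme (obtained by testing the ideal equation with $\bfu_H^0$, using ellipticity and the boundedness of $(I+\Kf)$ on $\mathcal{N}(\TauH)$) yields $\|\bfu_H^0\|_{\Hcurl}\lesssim\|\f\|_{\mathbf{H}(\Div)}$. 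The second piece is the standard LOD perturbation term: testing the ideal and localized equations with the same $\bfv_H \in \mathcal{N}(\TauH)$ and subtracting gives an identity of the form
\begin{align*}
\B\bigl((I+\Kf_m)(\bfu_H^0-\bfu_{H,m}),(I+\Kf_m)\bfv_H\bigr)
= \B\bigl(\bfu^{\ms},(\Kf-\Kf_m)\bfv_H\bigr) - (\f,(\Kf-\Kf_m)\bfv_H)_{L^2(\Omega)}.
\end{align*}
Choosing $\bfv_H = \bfu_H^0 - \bfu_{H,m}$, using the $\Hcurl$-ellipticity from (A4) on the left-hand side, and applying Lemma 3.2 together with stability bounds for $\bfu^{\ms}$ and $\f$ on the right-hand side, produces
\begin{align*}
\|(I+\Kf_m)(\bfu_H^0-\bfu_{H,m})\|_{\Hcurl} \lesssim \rho^m \|\f\|_{\mathbf{H}(\Div)}.
\end{align*}

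Combining the two pieces yields the localization contribution $\rho^m\|\f\|_{\mathbf{H}(\Div)}$, which together with the ideal-method estimate proves \eqref{full-error-estimate-natbou}. The second statement of the theorem is then immediate: if $\f\in\mathbf{H}_0(\Div)$ the boundary term in Conclusion 3.1 vanishes, and choosing $m$ of order $|\log H|/|\log\rho|$ forces $\rho^m\lesssim H$, collapsing the total bound to $H\|\f\|_{\mathbf{H}(\Div)}$. The main obstacle I anticipate is the clean execution of the perturbation step: one has to be careful that the bilinear form $\B(\cdot,\cdot)$ is evaluated on functions from $(I+\Kf_m)\mathcal{N}(\TauH)$ (not the ideal multiscale space), so that every occurrence of $\Kf - \Kf_m$ can be isolated and absorbed into Lemma 3.2 without picking up uncontrolled constants depending on $H$ or on the speed of oscillations of $\mu$ and $\kappa$.
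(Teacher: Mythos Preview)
Your approach is correct in spirit, but it takes a noticeably longer route than the paper's. The paper does \emph{not} pass through the ideal solution $\bfu_H^0$ at all: since the localized method is itself a Galerkin method in the space $(I+\Kf_m)\mathcal{N}(\TauH)$, C\'ea's lemma gives directly
\[
\|\bfu - (I+\Kf_m)\bfu_{H,m}\|_{\Hcurl} \;\lesssim\; \|\bfu - (I+\Kf_m)\bfu_H\|_{\Hcurl}
\]
for the specific comparison function $\bfu_H := \pi_H^E(\bfu)$. Then the exact decomposition $\bfu = \bfu_H + \Kf(\bfu_H) + \Gf(\f)$ (equation \eqref{decomposition-exact-solution}) yields $\|\bfu - (I+\Kf_m)\bfu_H\|_{\Hcurl} \le \|\Gf(\f)\|_{\Hcurl} + \|(\Kf-\Kf_m)\bfu_H\|_{\Hcurl}$, and one applies Conclusion~\ref{conclusion-estimate-ideal-method} to the first term and Lemma~\ref{lemma:approximation-of-correctors} plus the stability of $\pi_H^E$ to the second. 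This avoids entirely the perturbation identity and the discrete stability of $\bfu_H^0$ that you invoke. Your plan works too, but note that your displayed identity is missing a term: subtracting the ideal and localized equations and inserting $(I+\Kf_m)\bfu_H^0$ actually leaves an additional contribution $-\B\bigl((\Kf-\Kf_m)\bfu_H^0,(I+\Kf_m)\bfv_H\bigr)$ on the right-hand side. It is harmless (it is again bounded via Lemma~\ref{lemma:approximation-of-correctors} and $\|\bfu_H^0\|_{\Hcurl}\lesssim\|\f\|$), but you should include it if you carry out the argument in full.
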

\begin{proof}
The proof is analogous to \cite[Conclusion 15]{Gallistl18}. Let $\bfu_H= \pi_H^E(\bfu) \in \mathcal{N}(\TauH)$ be the Falk-Winther projection of the exact solution. Then we have with Lemma \ref{lemma:approximation-of-correctors}
\begin{eqnarray*}
\lefteqn{\| \bfu - \bfu_H - \mathcal{K}_m(\bfu_H) \|_{\Hcurl}
\overset{\eqref{decomposition-exact-solution}}{\le} \| \Gf(\f) \|_{\Hcurl} + \| \mathcal{K}(\bfu_H)- \mathcal{K}_m(\bfu_H) \|_{\Hcurl}} \\
&\le & \| \Gf(\f) \|_{\Hcurl} + C \rho^m \| \pi_H^E(\bfu) \|_{\Hcurl}  \overset{\eqref{falk-winther-global-interpol-est}}{\le} 
 \| \Gf(\f) \|_{\Hcurl} + C \rho^m \| \bfu \|_{\Hcurl} \\
 &\le&  \| \Gf(\f) \|_{\Hcurl} + C \rho^m \| \f \|_{\mathbf{H}(\Div)}. 
\end{eqnarray*}
Conclusion \ref{conclusion-estimate-ideal-method} together with the quasi-best approximation property of Galerkin methods finishes the proof.
\end{proof}
Theorem \ref{theorem-local-LOD} shows that we can still achieve the same order of accuracy as for the ideal multiscale method \eqref{ideal-LOD-form} if the number of layers $m$ is selected proportional to $|\log(H)|$. Practically this means that in order to compute a sufficiently accurate corrector $\Kf_m$, we need to solve local problems on small patches with a diameter of order $H |\log(H)|$. The number of these local problems equals the number of all tetrahedra times the number of edges of a tetrahedron (which is equal to the number of basis functions of $\mathcal{N}(\TauH)$ that have support on a tetrahedron $T\in \TauH$). Hence, the total number of local problems is  $6\cdot \sharp \TauH$. All these problems are small, cheap to solve and they are fully independent from each other, which allows for parallelization in a straightforward way. 

There are two more issues to be discussed, first, we need a computational representation of the Falk-Winther projection operator $\pi_H^E$. As this summarizes the results of the previous works \cite{FalkWinther2014,Gallistl18} and in particular \cite{FalkWinther2015}, we postpone it to the appendix. The second issue that remains is the question if and how we can restore a full linear convergence for the error in \eqref{full-error-estimate-natbou}, if $\f$ does not have a vanishing normal trace. We will investigate this question in the next subsection.

\subsection{Source term and boundary corrections}
\label{subsection:source-corrections}

In Theorem \ref{theorem-local-LOD} we have seen that we can in general not expect a full linear convergence in the mesh size $H$, unless $\f \in \mathbf{H}_0(\Div)$. The estimate \eqref{full-error-estimate-natbou} suggests a convergence of order $H^{1/2}$, which we can also confirm numerically (cf. Section \ref{sec:example2}). Hence, as the full linear rate is typically not achieved for arbitrary source terms, this poses the question if we can modify the method accordingly.

One popular way to handle the influence of dominating source terms or also general (inhomogeneous) boundary conditions is to compute source and boundary corrections (cf. \cite{Henning14} for details in the context of $H^1$-elliptic problems). Here we recall decomposition \eqref{decomposition-exact-solution}, which says that we can write the exact solution as
$$
\bfu = \bfu_H + \Kf(\bfu_H) + \Gf(\f).
$$
Practically, we can approximate $\Gf(\f)$ in the spirit of Lemma \ref{lemma:approximation-of-correctors} by a corrector $\Gf_m(\f)$ based on localization. This means, that for each $T\in\TauH$, we can solve for an element source corrector $\Gf_{T,m}(\f) \in \Wf( \hspace{2pt}\textup{N}^m(T) \hspace{2pt})$ with
\begin{align*}
\B ( \hspace{2pt} \Gf_{T,m}(\f)  \hspace{2pt}, \bfw ) = ( \f , \bfw )_{L^2(T)} \qquad \mbox{for all } \bfw \in \Wf( \hspace{2pt}\textup{N}^m(T) \hspace{2pt}).
\end{align*}
The global source corrector is then given by
$$
\Gf_{m}(\f):=\sum_{T\in\TauH} \Gf_{T,m}(\f).
$$
After this, we can solve the corrected homogenized problem which is of the following form: find $\bfu_{H,m}^{\corr} \in \mathcal{N}(\TauH)$ such that
$$
\B( (I + \Kf_m) \bfu_{H,m}^{\corr}  , (I+\Kf_m) \bfv_H  ) = (\f , (I+\Kf_m) \bfv_H )_{L^2(\Omega)} - 
\B(  \Gf_{m}(\f) , (I+\Kf_m) \bfv_H  ) 
$$
for all $\bfv_H \in \mathcal{N}(\TauH)$. The final approximation is given by
$$
\bfu_{H,m}^{\ms} := (I + \Kf_m) \bfu_{H,m}^{\corr}  + \Gf_{m}(\f)
$$
and it is possible to prove that the error converges exponentially fast in $m$ to zero:
\begin{align*}
\| \bfu -  \bfu_{H,m}^{\ms}  \|_{\Hcurl} \lesssim \rho^m \| \f  \|_{L^2(\Omega)},\qquad \mbox{for a generic }  0<\rho < 1.
\end{align*}
The error estimate can be proved using similar arguments as in the proof of Theorem \ref{theorem-local-LOD}. Here we recall that $\rho$ is independent of $m$ and $H$. By selecting $m\gtrsim k |\log(H)|/|\log(\rho)|$, we can achieve the convergence rate $H^k$ for any $k\ge 1$. For very large $m$ (i.e $\textup{N}^m(T)=\Omega$) the method is exact.

\begin{remark}
Note that this strategy can be also valuable for handling general non-homogenous boundary conditions. For example, considering the boundary condition $\mu \curl \bfu \times \mathbf{n} = \mathbf{g}$ on $\Gamma$ (which can be straightforwardly incorporated into the variational formulation), it is possible to compute boundary correctors to increase the accuracy. This works analogously to the case of source correctors, i.e., one needs to compute element boundary correctors $\Gf_E(\mathbf{g})\in \Wf$ for each boundary edge $E\subset \Gamma$. The edge boundary correctors solve problems of the form 
$$
\B( \Gf_E(g) , \bfw ) = ( \mathbf{g} , \bfw )_{L^2(E)} \qquad \mbox{for all } \bfw \in \Wf. 
$$
\end{remark}
Though the above strategy can potentially yield any degree of accuracy, it involves a  computational overhead, since it additionally requires to compute the whole set of element source correctors $\Gf_{T,m}(\f)$. One way to reduce this overhead is to only compute correctors $\Gf_{T,m}(\f)$ for which $T$ is close to the boundary. For example, we can set
$$
\mathcal{T}_H^{\interior} := \{ T \in \TauH| \hspace{3pt} \overline{\textup{N}^m(T)} \cap \Gamma \not= \emptyset \} \qquad \mbox{and} \qquad
\mathcal{T}_H^{\out} :=  \TauH \setminus \mathcal{T}_H^{\interior}.
$$
In this case we have for the error
\begin{eqnarray*}
\lefteqn{ \bfu - \big(\bfu_H + \Kf_m(\bfu_H) + \sum_{T\in \mathcal{T}_H^{\out}} \Gf_{T,m}(\f) \big)} \\
&=& (\Kf_m -\Kf)\bfu_H
+ \sum_{T\in \mathcal{T}_H^{\interior}} (\Gf_{T,m}-\Gf_{T}) \f
+ \sum_{T\in \mathcal{T}_H^{\out}} (\Gf_{T,m}-\Gf_{T})\f
- \sum_{T\in \mathcal{T}_H^{\interior}} \Gf_{T,m}(\f).
\end{eqnarray*}
Here, the first three terms show an exponential convergence with order $\rho^m$, whereas the last term is not polluted by the boundary influence and converges with order $H$, since $( \sum_{T\in \mathcal{T}_H^{\interior}} \Gf_{T,m}(\f))_{\vert \Gamma}=0$. 

\section{Numerical homogenization for essential boundary conditions}\label{sec:essential_bc}
In the following we want to consider the curl-curl-problem with essential boundary conditions as originally done in \cite{Gallistl18}, i.e., we seek $\bfu^0: \Omega \to \C^3$ such that
\begin{equation}\label{curlcurl_eq_essential}
\begin{alignedat}{2}
\curl(\mu \curl \bfu^0) + \kappa \hspace{2pt}\bfu^0 &= \f,& \quad &\text{in } \Omega, \\
\bfu^0 \times \mathbf{n} &= 0,& &\text{on } \partial\Omega.
\end{alignedat}
\end{equation}
For the variational formulation we denote the restriction of $\Hcurl$ to functions with a vanishing normal trace on $\Gamma$ by
$$\mathbf{H}_0(\curl):=\{\bfv\in \mathbf{H}(\curl,\Omega)|\hspace{3pt} \bfv\times \mathbf{n} \vert_{\partial \Gamma} =0\}.$$
Hence, the weak formulation of \eqref{curlcurl_eq_essential} becomes: find $\bfu^0 \in \mathbf{H}_0(\curl)$ with
$$
\B( \bfu^0 , \bfv ) = (\f , \bfv)_{L^2(\Omega)} \qquad\mbox{for all } \bfv \in \mathbf{H}_0(\curl).
$$ 
Assuming again (A1)-(A4), this problem is well-posed. 

To discretize the problem, let the subspace of $\CN(\TauH)$ which contains functions with a homogenous tangential trace to be given by
$$
 \mathring{\CN}(\TauH):= \CN(\TauH) \cap \Hcurlhom.
$$
With this a multiscale approximation analogously to the case of natural boundary conditions (cf. Section \ref{sec:LOD:natbc}) can be obtained by considering Falk-Winther projections between the spaces with vanishing traces (cf. \cite{Gallistl18,FalkWinther2014}), i.e., local and stable projections
$$
\mathring{\pi}_H^{E} : \mathbf{H}_0(\curl) \rightarrow \mathring{\CN}(\TauH) 
\qquad
\mbox{and}
\qquad
\mathring{\pi}_H^{V} : H^1_0(\Omega) \rightarrow \mathring{\mathcal{S}}(\TauH):=\mathcal{S}(\TauH) \cap H^1_0(\Omega)
$$
with the commuting property $\nabla \mathring{\pi}_H^{V}(v) = \mathring{\pi}_H^{E}(\nabla v)$ for all $v \in H^1_0(\Omega)$. In this setting, the following result can be proved (see \cite{Gallistl18}):

Recall the notation from Section \ref{subsection:LOD_Hcurl}. For $T\in \TauH$ and $m\in \mathbb{N}_{>0}$, let the local detail space be given by
$$
\mathring{\Wf}( \hspace{2pt}\textup{N}^m(T) \hspace{2pt}) := \{ \bfw \in \mbox{kern}(\mathring{\pi}_H^{E}) | 
 \hspace{2pt}  \bfw \equiv 0 \hspace{4pt}\mbox{\rm in } \Omega \setminus \textup{N}^m(T) \} 
 \subset  \mathbf{H}_0(\curl)
$$
and the element correctors $\mathring{\Kf}_{T,m}(\bfv_H) \in \mathring{\Wf}( \hspace{2pt}\textup{N}^m(T) \hspace{2pt})$ 
 be the solutions to 
\begin{align}
\label{KfTm-essent-bound}
 \B ( \mathring{\Kf}_{T,m}(\bfv_H) , \bfw ) = \B_T(\bfv_H , \bfw  ) \qquad \mbox{for all } \bfw\in \mathring{\Wf}( \hspace{2pt}\textup{N}^m(T) \hspace{2pt}).
\end{align}
We define the global corrector as usual by $\mathring{\Kf}_m(\bfv_H):=\sum_{T\in \TauH}  \mathring{\Kf}_{T,m}(\bfv_H)$. With this, the final multiscale approximation is given by $\mathring{\bfu}_{H,m} \in \mathring{\CN}(\TauH)$ with
\begin{align}
\label{lod-essential-boundary-conditions}
\B( (\id + \mathring{\Kf}_m ) \mathring{\bfu}_{H,m} ,  (\id + \mathring{\Kf}_m )\bfv ) =
(\f , (\id + \mathring{\Kf}_m )\bfv ) \qquad \mbox{for all } \bfv \in \mathring{\CN}(\TauH)
\end{align}
and it holds the error estimate 
\begin{align*}
\| \bfu - (\id + \mathring{\Kf}_m ) \mathring{\bfu}_{H,m} \|_{\Hcurl} \lesssim (H + \rho^m) \| \f \|_{\mathbf{H}(\Div)},
\end{align*}
where $0<\rho<1$ is a generic decay constant as before. Note that this result holds independently of the normal trace of $\f$. This is due to the regular decomposition of functions $\bfv \in \mathbf{H}_0(\curl)$, which is of the form $\bfv = \mathbf{z} + \nabla \theta$, where $\mathbf{z}\in [H^1_0(\Omega)]^3$ and $\theta \in H^1_0(\Omega)$. Hence, the problematic term $( \f \cdot \mathbf{n}, \theta)_{L^2(\Gamma)}$ in \eqref{ideal-method-est-0} (which cause the degeneration of convergence rates) is equal to zero.

The main issue with essential boundary conditions is the availability of the required Falk-Winther projection $\mathring{\pi}_H^{E}$ (in the de Rham complex with vanishing traces). So far, the projection has not yet been explicitly constructed, which is also the main reason why the approach of \cite{Gallistl18} was not yet numerically validated. It is worth to mention that a corresponding construction would be similar to the construction in the case of natural boundary conditions (see Appendix \ref{section-falk-winther-construction}), with the difference that all degrees of freedom on the boundary need to be set to zero in the construction (cf. the descriptions in \cite{FalkWinther2014} and in \cite[Section 7.1]{Demlow2017}). To our understanding, this leads to local equations (as part of the construction of the projection) which are formulated on a complex with mixed boundary conditions, which we however had problems to solve numerically. Hence, an implementation of $\mathring{\pi}_H^{E}$ remains open, which raises the question if it is possible to work with the projection $\pi_H^{E}$ on the full space instead.

In \cite[Section 8]{Gallistl18} it was suggested to start from $\pi_H^{E}$ and then set all weights to zero that belong to edge-based basis functions $\bfpsi_E$ for a boundary edge $E\subset \Gamma$. This means the multiscale method is based on the kernel of the operator
$$
\pi_H^{0,E} := \mathcal{I}_H^{0,E} \circ \pi_H^E \hspace{3pt}: \hspace{3pt}\Hcurlhom \rightarrow \HNedhom,
$$
where $\mathcal{I}_H^{0,E}: \Hcurlhom \rightarrow \HNedhom$ is the standard edge-based N{\'e}d{\'e}lec interpolation operator. This choice works indeed well if $\f \in \mathbf{H}(\Div)$ has a vanishing normal trace, but it fails for general $\f$ (i.e., if $\f\cdot\mathbf{n} \not=0$ on $\Gamma$). The reason is that $\pi_H^{0,E}$ does no longer fulfill the very important commuting diagram property in a \quotes{one-coarse layer}-environment of $\Gamma$. This leads to a local loss of stability. In the arising multiscale method, it can be compensated if $\f \cdot \mathbf{n}=0$. However, in any other case the stability issues on the boundary cause a pollution of the numerical solution. We added a corresponding numerical experiment in Section~\ref{sec:example4} which shows that the convergence rate is no longer linear but only $\mathcal{O}(H^{1/2})$.

$\\$
As an alternative we suggest to simply keep on using the same $\pi_H^E$ as for the case of natural boundary conditions and proceed otherwise analogously to \eqref{lod-essential-boundary-conditions}. Practically, this means that we select the local detail space for the computation of $\Kf_{T,m}^0$ (analogously to \eqref{KfTm-essent-bound}) as
$$
\Wf^0( \hspace{2pt}\textup{N}^m(T) \hspace{2pt}) := \{ \bfw \in \Hcurlhom | 
 \quad  \pi_H^E(\bfw)=0 
 \quad \mbox{and} \quad \bfw \equiv 0 \hspace{4pt}\mbox{\rm in } \Omega \setminus \textup{N}^m(T) \}.
$$
Otherwise, nothing changes and the multiscale approximation is given by $\bfu_{H,m}^0 \in \mathring{\CN}(\TauH)$ with
\begin{align}
\label{lod-essential-boundary-conditions-with-org-fwp}
\B( (\id + \Kf^0_m ) \bfu_{H,m}^0 ,  (\id + \Kf^0_m )\bfv ) =
(\f , (\id + \Kf^0_m )\bfv ) \qquad \mbox{for all } \bfv \in \mathring{\CN}(\TauH).
\end{align}
Since typically $\pi_H^E(\bfv) \not\in \HNedhom$ for $\bfw\in \Hcurlhom$, the modified formulation introduces an error at the boundary which reduces the overall convergence rate. This is similar to the case of natural boundary conditions with $\f \cdot \mathbf{n} \not= 0$. Indeed, the experiments in Section~\ref{sec:example3} and Section~\ref{sec:example4} show that we get full linear convergence in the case when $\f \cdot \mathbf{n} = 0$ and convergence of order $\sqrt{H}$ when $\f \cdot \mathbf{n} \not= 0$. Again, a full linear rate can be reconstructed by using source term corrections close to the boundary as described in Section \ref{subsection:source-corrections}. This is investigated numerically in Section~\ref{sec:example4}. Here we stress that the use of source correctors to handle the case of essential boundary conditions with $\f \cdot \mathbf{n} \not= 0$ is currently the only available option for optimal convergence rates. A preferable alternative would be a multiscale construction based on a computable Falk-Winther projection for essential boundary conditions as this would not require additional source correctors. However, as explained above, such a projection is currently not available.

\section{Numerical Experiments}\label{sec:experiments}
In this section we present some numerical examples in both $2D$ and $3D$ to verify the theoretical findings. For simplicity, we choose the computational domain to be either the unit square or the unit cube, depending on the dimension of the problem. In $2D$, the considered curl-curl-problem is defined analogously to the $3D$ problem given by \eqref{curlcurl_weak_natural}, however, with the difference that the $2$-dimensional $\curl$-operator maps vector fields to scalar values via $\curl(\bfv):= \partial_{x_1} \bfv_2 - \partial_{x_2} \bfv_1$. The Falk-Winther projection in $2D$ and $3D$ is discussed in Appendix \ref{section-falk-winther-construction}. All other modifications in the problem and method formulation are straightforward. Even though there are no analytical results for the $2D$ case, our numerical experiments will show that the convergence behavior is analogous to the theoretically supported $3D$ setting.

To compute the error of the method in the examples below, we compute a reference solution on a finer (uniform) mesh, of size $h = \sqrt{2}\cdot 2^{-6}$ in $2D$ and $h=\sqrt{3}\cdot 2^{-4}$ in $3D$, using classical FEM in the lowest order N\'{e}d\'{e}lec space $\mathcal{N}(\mathcal{T}_h)$. The relative errors are reported in the energy norm induced by the sesquilinear form $\B(\cdot,\cdot)$. That is, if $\mathbf{e}$ denotes the difference between the full LOD multiscale approximation $\mathbf{u}_{H,m}+\mathcal{K}_m(\mathbf{u}_{H,m})$ and the reference solution $\mathbf{u}_h$, then the relative error is computed as
\begin{align*}
\Bigg(\frac{\B(\mathbf{e},\mathbf{e})}{\B(\mathbf{u}_h,\mathbf{u}_h)}\Bigg)^{1/2} = \Bigg(\frac{(\mu \curl \mathbf{e}, \curl \mathbf{e}) + (\kappa \mathbf{e}, \mathbf{e})}{(\mu \curl \mathbf{u}_h, \curl \mathbf{u}_h) + (\kappa \mathbf{u}_h, \mathbf{u}_h)}\Bigg)^{1/2}.
\end{align*}

To achieve a multiscale character of the problem, we choose the coefficients, $\mu$ and $\kappa$, to be checkerboards. The values in the cubes (squares in $2D$) are set to $1$ (black) and $0.001$ (white), respectively. See Figure~\ref{fig:checkerboards} for a visualization. The size of the checkerboard (i.e., the size of each constant block) is always chosen to be twice the size of the reference mesh. This means that the reference solution always resolves the coefficients. A typical reference solution in the $2D$ case is plotted in Figure~\ref{fig:ref_solution}, where we clearly see the multiscale structure from the checkerboard.  

All implementations are made using the FEniCS software \cite{Fenics12}. The code for the Falk-Winther projection in both $2D$ and $3D$ can be obtained from \cite{code}.

\begin{figure}[h]
	\centering
	\begin{subfigure}{0.35\textwidth}
		\includegraphics[width=\textwidth]{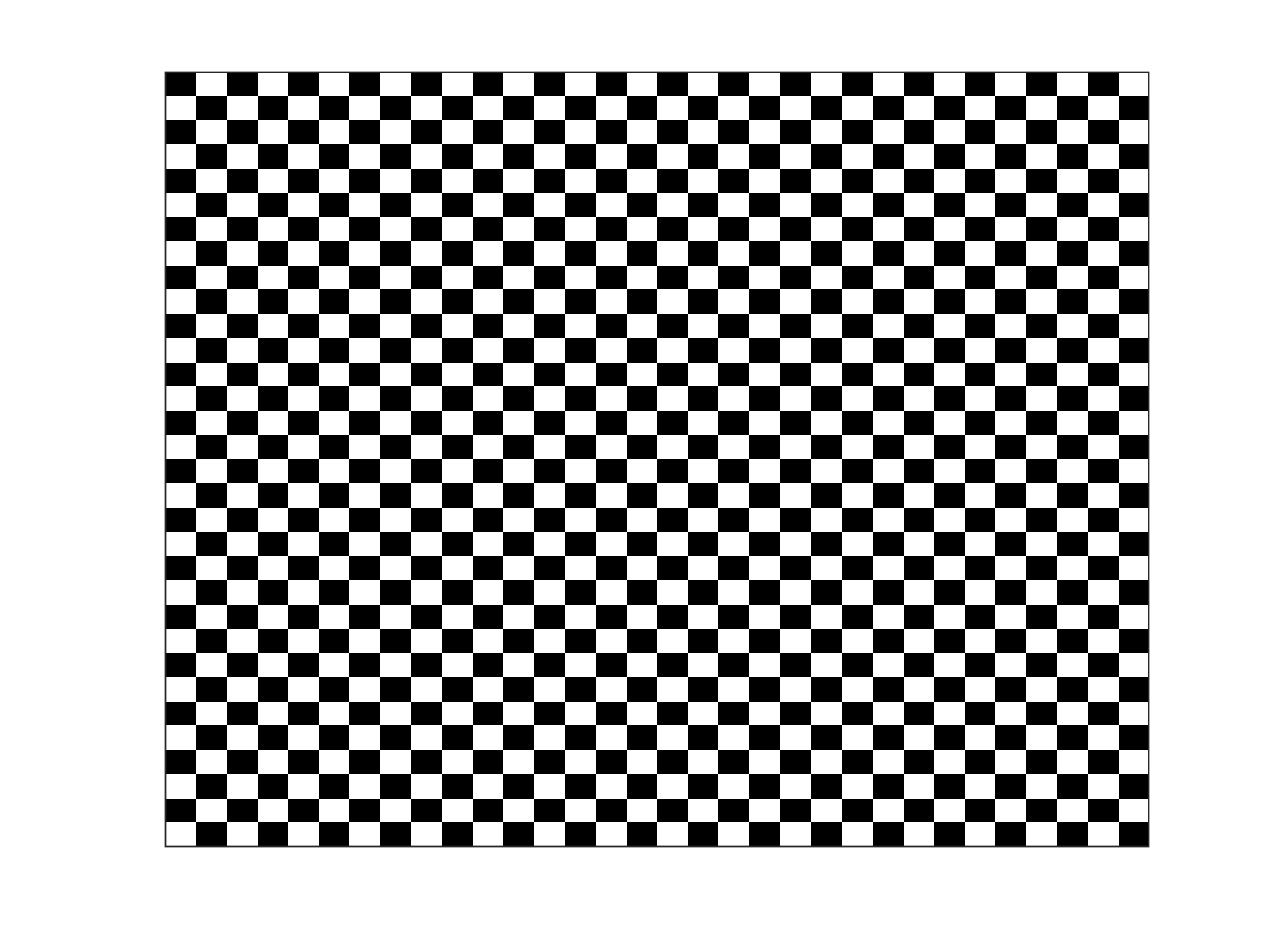}
		\caption{Two dimensions}
	\end{subfigure} ~
	\begin{subfigure}{0.35\textwidth}
		\includegraphics[width=\textwidth]{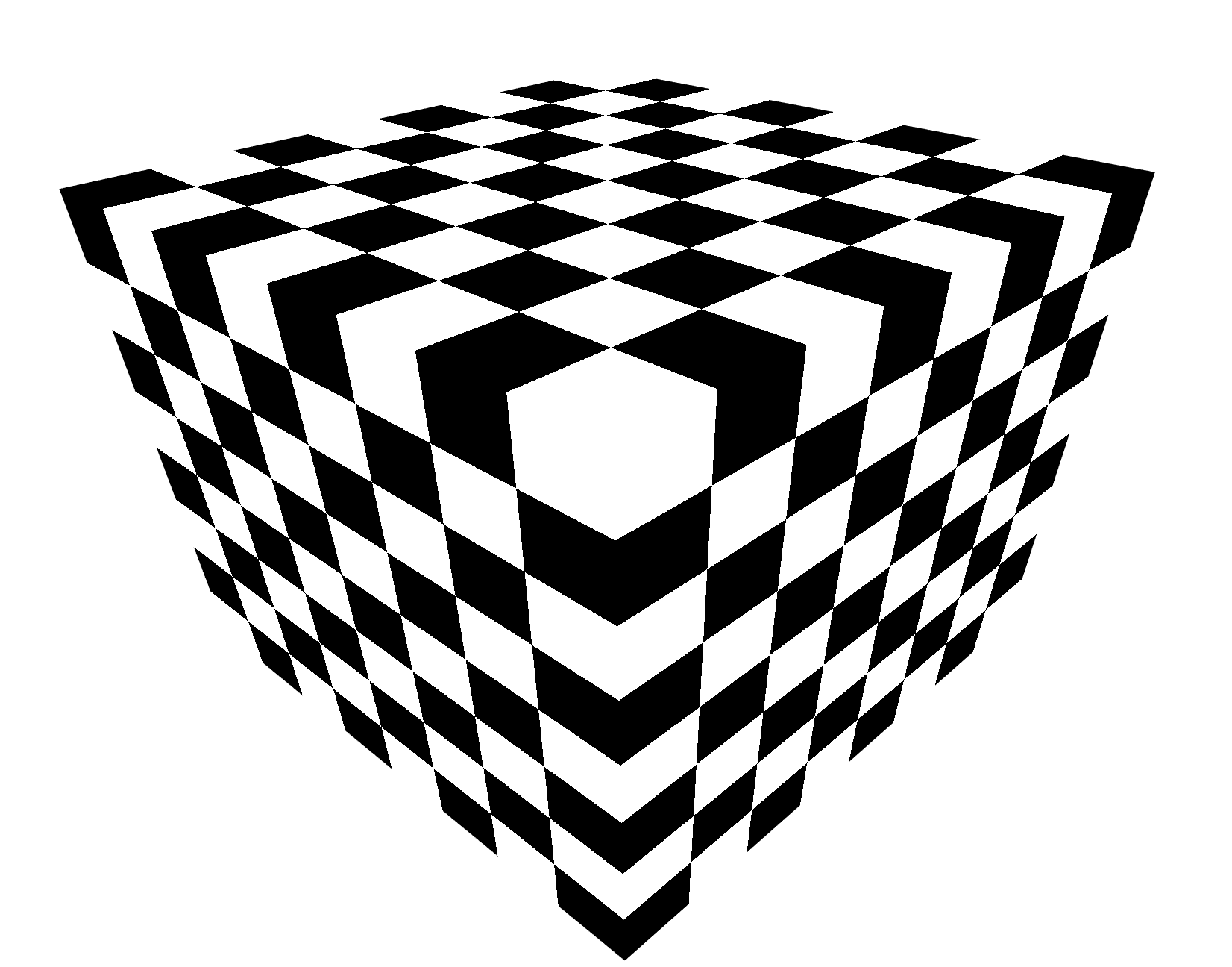}
		\caption{Three dimensions}
	\end{subfigure}
	\caption{Checkerboard coefficient for two and three dimensions.}\label{fig:checkerboards}
\end{figure}

\begin{figure}[h]
	\centering
	\includegraphics[width=0.6\textwidth]{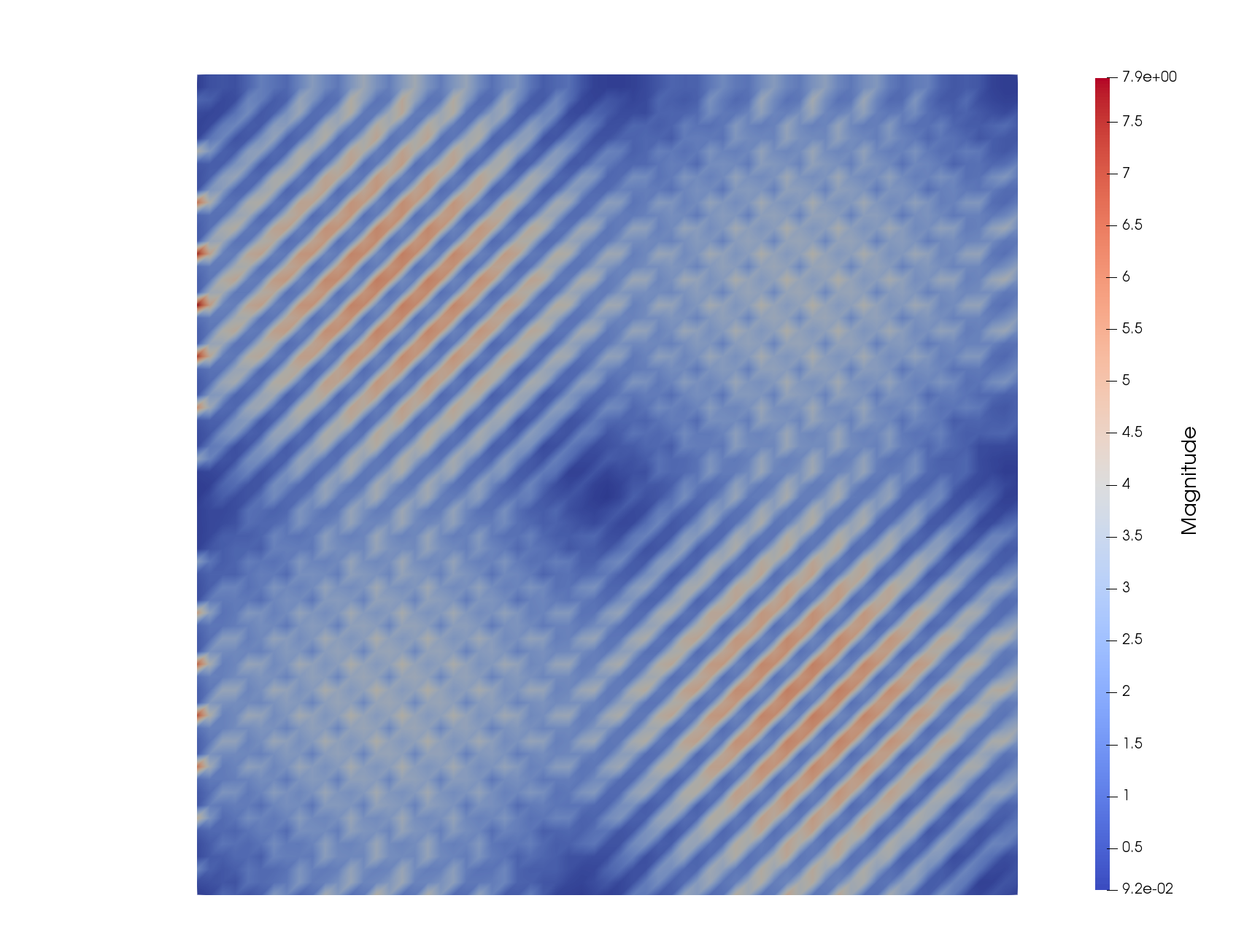}
	\caption{Reference solution in $2D$ with $\mathbf{f}(x,y) = [\sin(2\pi x),\sin(2 \pi y)]$ and natural boundary conditions.}\label{fig:ref_solution}
\end{figure}

\subsection{Example 1 - Natural boundary conditions}\label{sec:example1}
In this example we show that we obtain linear convergence of the method for problems that fulfill $\mathbf{f} \cdot \mathbf{n} = 0$, as predicted in Theorem~\ref{theorem-local-LOD}. 

In $2D$ we choose $\mathbf{f}(x,y) = [\sin(2\pi x),\sin(2 \pi y)]$ and we compute LOD approximations on uniform meshes of size $H = \sqrt{2}\cdot 2^{-j}$, $j=0,...,5$. Furthermore, we let the number of layers in the patch $N^m(T)$ range through $m = 1,1,2,2,3,4$ as $H$ decreases. For each value of $H$, the error for the LOD method is compared to the classical finite element approximation in the N\'{e}d\'{e}lec space $\mathcal{N}(\TauH)$. 

In $3D$ we choose $\mathbf{f}(x,y,z) = [-x(x-1)(2z-1),0,z(z-1)(2x-1)]$. The LOD and the corresponding FEM approximations are computed for $H=\sqrt{3}\cdot2^{-j}$, $j=0,..,3$. 

The results are plotted in Figure~\ref{fig:Example1_2D}, where we clearly see the linear convergence (marked by a dashed line) and also that the method produces significantly smaller errors than the classical FEM on the same coarse mesh $\TauH$. 

\begin{figure}[h]
	\centering
	\begin{subfigure}{0.45\textwidth}
		\includegraphics[width=\textwidth]{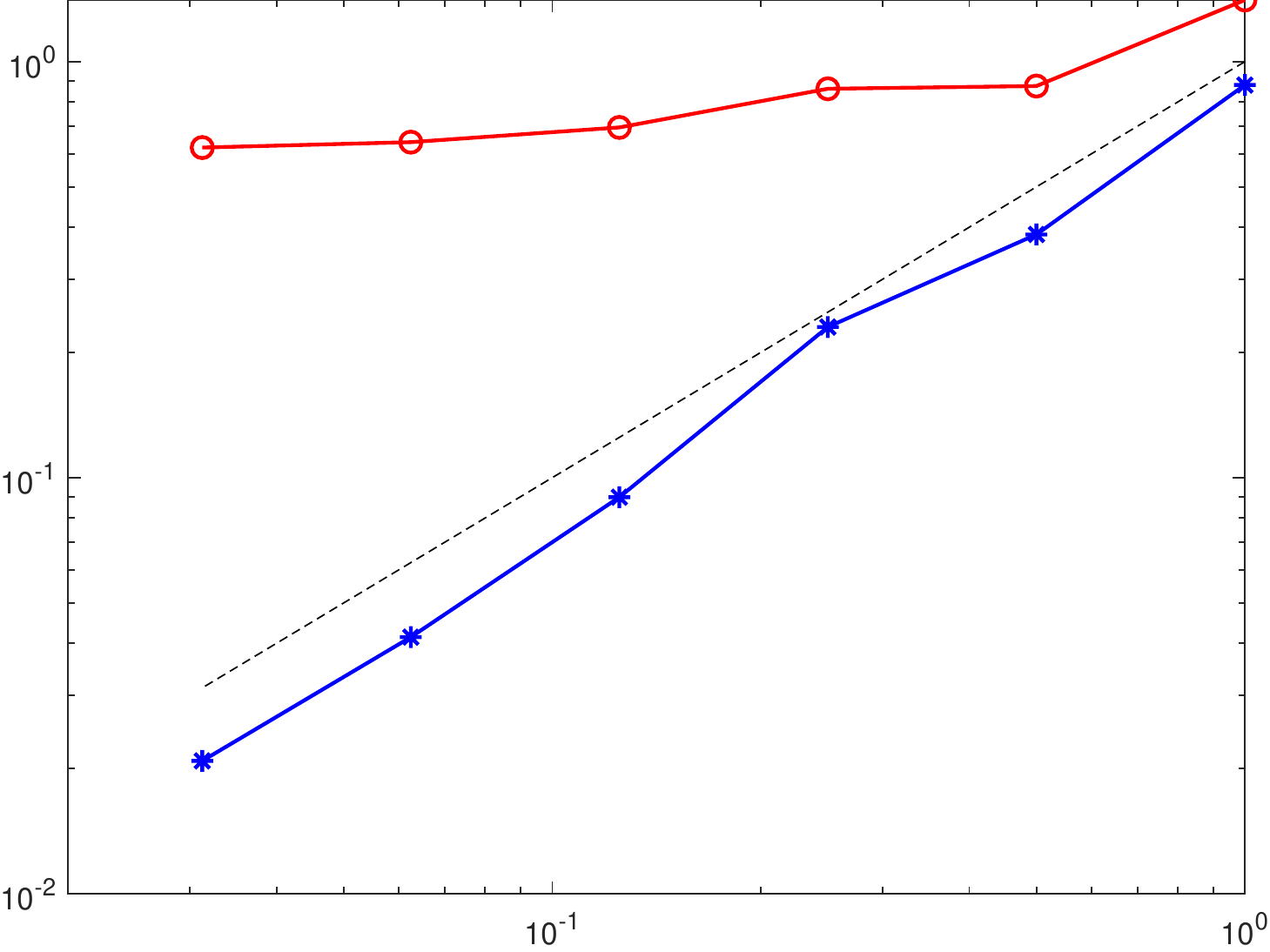}
		\caption{Two dimensions}
	\end{subfigure} ~
	\begin{subfigure}{0.45\textwidth}
		\includegraphics[width=\textwidth]{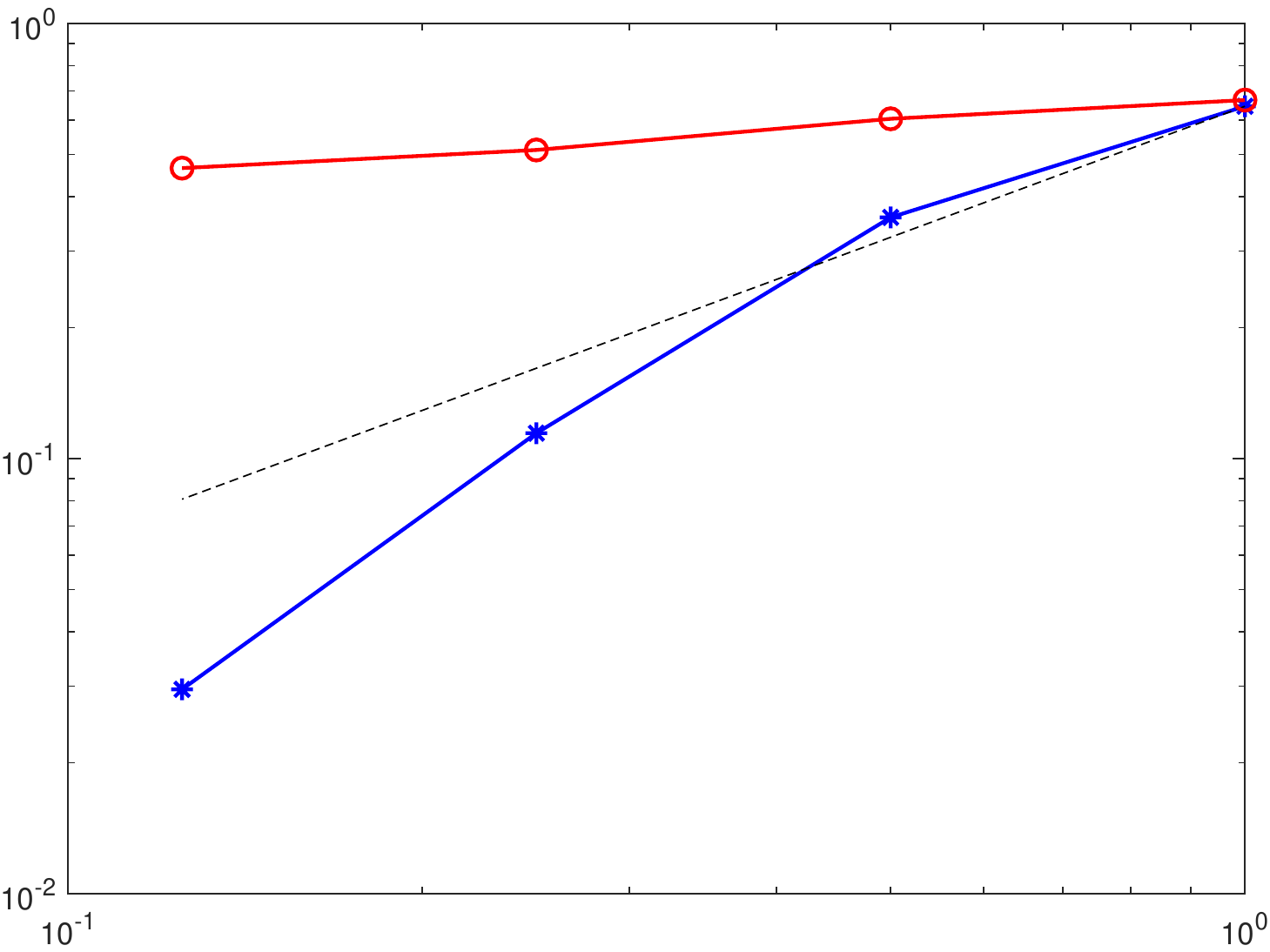}
		\caption{Three dimensions}
	\end{subfigure}
	\caption{Relative errors for the LOD (blue $\ast$) and the FEM (red $\circ$) approximations of Example 1 plotted against the mesh size $H$. The dashed line is $cH$.}\label{fig:Example1_2D}
\end{figure}

\subsection{Example 2 - Natural boundary conditions}\label{sec:example2}
Here we show that the convergence is of order $H^{1/2}$ if $\mathbf{f}\cdot \mathbf{n} \neq 0$, in accordance with Theorem~\ref{theorem-local-LOD}.

In $2D$ we choose $\mathbf{f}(x,y) = [1, 1]$. The meshes for the LOD approximations and the reference mesh remain the same as in Example~\ref{sec:example1}. 

First, to demonstrate that the reduced convergence does not depend on the localization parameter $m$, we use the ideal method. That is, we do not use any localization when computing the LOD approximations. In Figure~\ref{fig:Example2_2D} we clearly see that the $H^{1/2}$-term dominates the convergence ($H^{1/2}$ marked by a dashed line). In the plot the error for iteration $j=0$ is omitted for the LOD approximation, because the error in this particular setting is close to zero. We also note that, despite the reduced convergence rate, the method still outperforms the classical FEM in terms of accuracy relative to the number of degrees of freedom in the solution space. 

To improve the error we test the source term corrections proposed in Section~\ref{subsection:source-corrections}. We present results in $2D$ for two cases: 1. when the correction is computed for all triangles and 2. when it is only computed for triangles that intersect the outer boundary $\Gamma$. In Table~\ref{correction_table} we see how this affects the error for some values of $H$ and $m$. As expected, computing the corrections for all triangles in the domain decreases the error drastically. Note that for the ideal method (without localization) the error would vanish completely. Furthermore, we can see that computing the corrections only for triangles intersecting the boundary still reduces the error significantly. 

Essentially, source corrections allow for an arbitrary accuracy depending on the size of the localization patches (i.e., the value of $m$) and the number of coarse elements on which the source correction is computed. From our experiments we see that one layer of coarse elements around the boundary is often sufficient to obtain a reasonable accuracy.

\begin{figure}[h]
	\centering
	\includegraphics[width=0.5\textwidth]{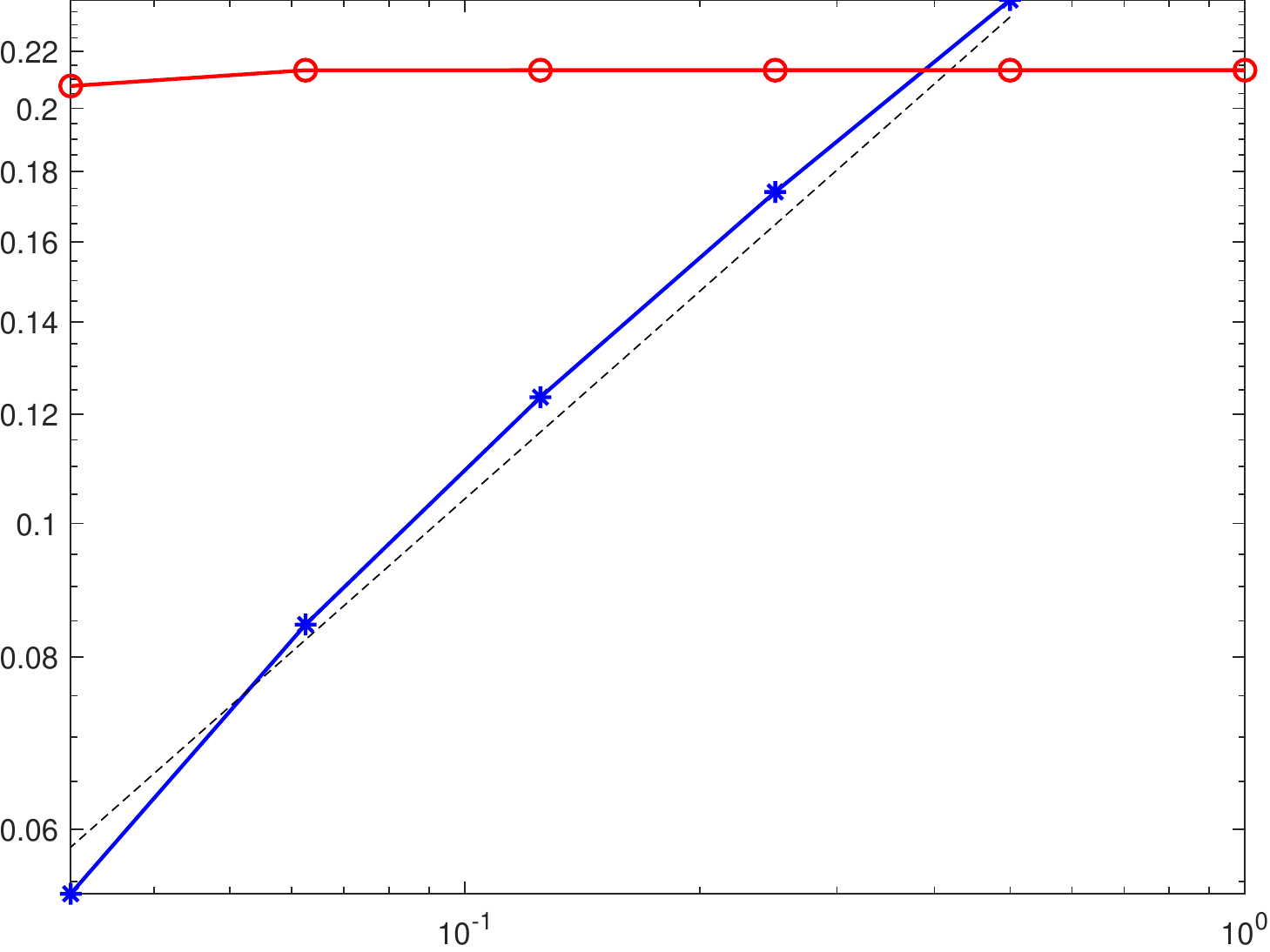}
	\caption{Relative errors for the LOD without localization (blue $\ast$) and the FEM (red $\circ$) approximations of Example 2 plotted against the mesh size $H$. The dashed line is $cH^{1/2}$.}\label{fig:Example2_2D}
\end{figure}

\begin{table}[h]
	\caption{Relative errors for the LOD approximation of Example 2 with and without correction for the source term.}\label{correction_table}
	\begin{tabular}{|l|c|c|}
		\hline
		& $H=\sqrt{2}\cdot 2^{-2}$, $m=2$  & $H=\sqrt{2}\cdot 2^{-3}$, $m=3$ \\ \hline
		LOD without correction & 0.1731 & 0.1235 \\ \hline
		Corrections for triangles at the boundary & 0.101  &  0.0828 \\ \hline
		Corrections for all triangles & $0.738\cdot 10^{-4}$ & $0.261\cdot 10^{-4}$ \\ \hline
	\end{tabular}
\end{table}


\subsection{Example 3 - Essential boundary conditions}\label{sec:example3}
In this example we test the convergence rate for the problem with essential boundary conditions, i.e., problem \eqref{curlcurl_eq_essential}, as described in Section~\ref{sec:essential_bc} when $\mathbf{f} \cdot \mathbf{n}=0$ is fulfilled.
We choose the same setting as in Section~\ref{sec:example1}, except that we impose homogeneous essential boundary conditions instead. In Figure~\ref{fig:Example3_2D+3D}, we clearly see that linear order convergence is achieved in $2D$. In $3D$ linear convergence is achieved except for the coarsest mesh size. The dashed line is plotted from the second point. The experiment demonstrates that if $\mathbf{f} \cdot \mathbf{n}=0$, then the same Falk-Winther projection can be used for solving both the curl-curl-problem with essential boundary conditions and the curl-curl-problem with natural boundary conditions. In both cases, an optimal linear convergence rate is obtained.

\begin{figure}[h]
	\centering
	\begin{subfigure}{0.45\textwidth}
		\includegraphics[width=\textwidth]{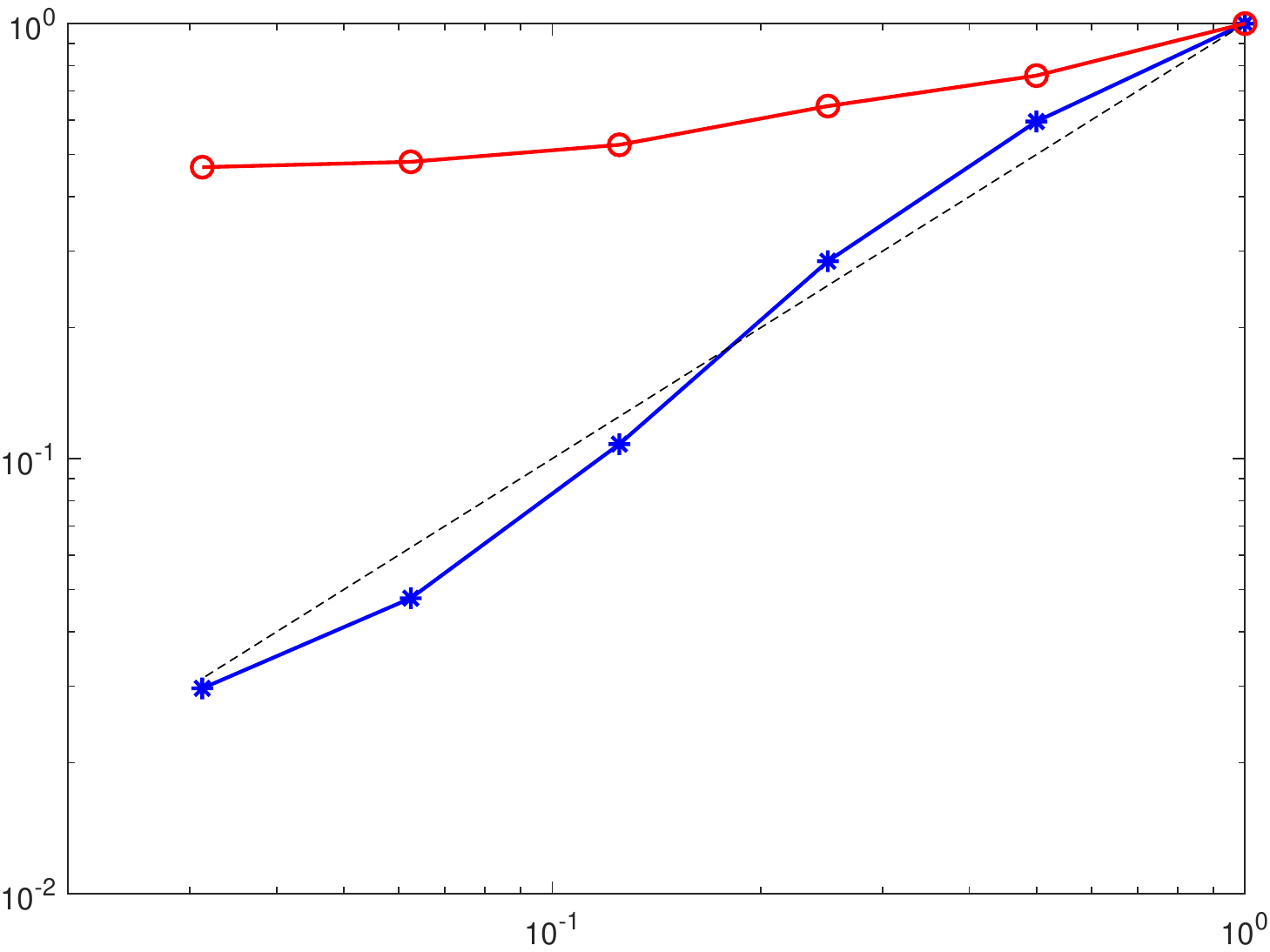}
		\caption{Two dimensions}
	\end{subfigure} ~
	\begin{subfigure}{0.45\textwidth}
		\includegraphics[width=\textwidth]{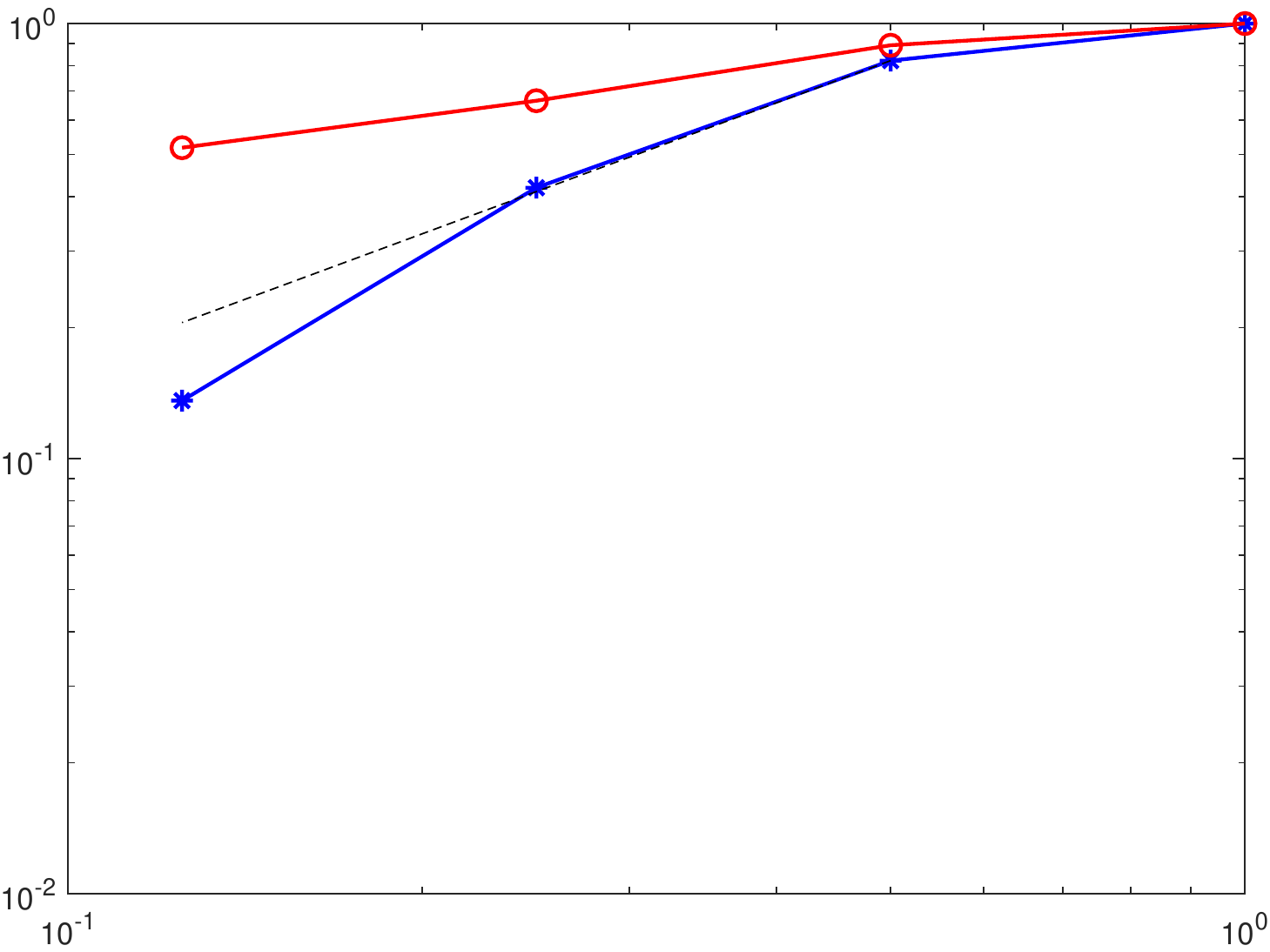}
		\caption{Three dimensions}
	\end{subfigure}
	\caption{Relative errors for the LOD (blue $\ast$) and the FEM (red $\circ$) approximations of Example 3 plotted against the mesh size $H$. The dashed line is $cH$.}\label{fig:Example3_2D+3D}
\end{figure}

\subsection{Example 4 - Essential boundary conditions}\label{sec:example4}
In this example we consider again essential boundary conditions, i.e., problem  \eqref{curlcurl_eq_essential}. Otherwise we put ourselves in the same setting as in Section~\ref{sec:example2}.

First, we conclude that the convergence rate is of order $H^{1/2}$ if $\mathbf{f}\cdot \mathbf{n} \neq 0$, see Figure~\ref{fig:Example4_2D}. Here we have used the LOD method without localization to show that the reduced convergence rate is not due to the localization parameter. We also note that the method still outperforms the classical FEM (in terms of accuracy relative to the number of degrees of freedom), despite the reduced convergence rate. 

As in Section~\ref{sec:example2} we also try to improve the error by using the source term correctors. The results are displayed in Table~\ref{correction_table_2} for some values of $H$ and $m$. As we can see, the error is improved drastically when computing the correction for all triangles. If the computations are restricted to the triangles intersecting the boundary, the error is still improved.

\begin{figure}[h]
	\centering
	\includegraphics[width=0.5\textwidth]{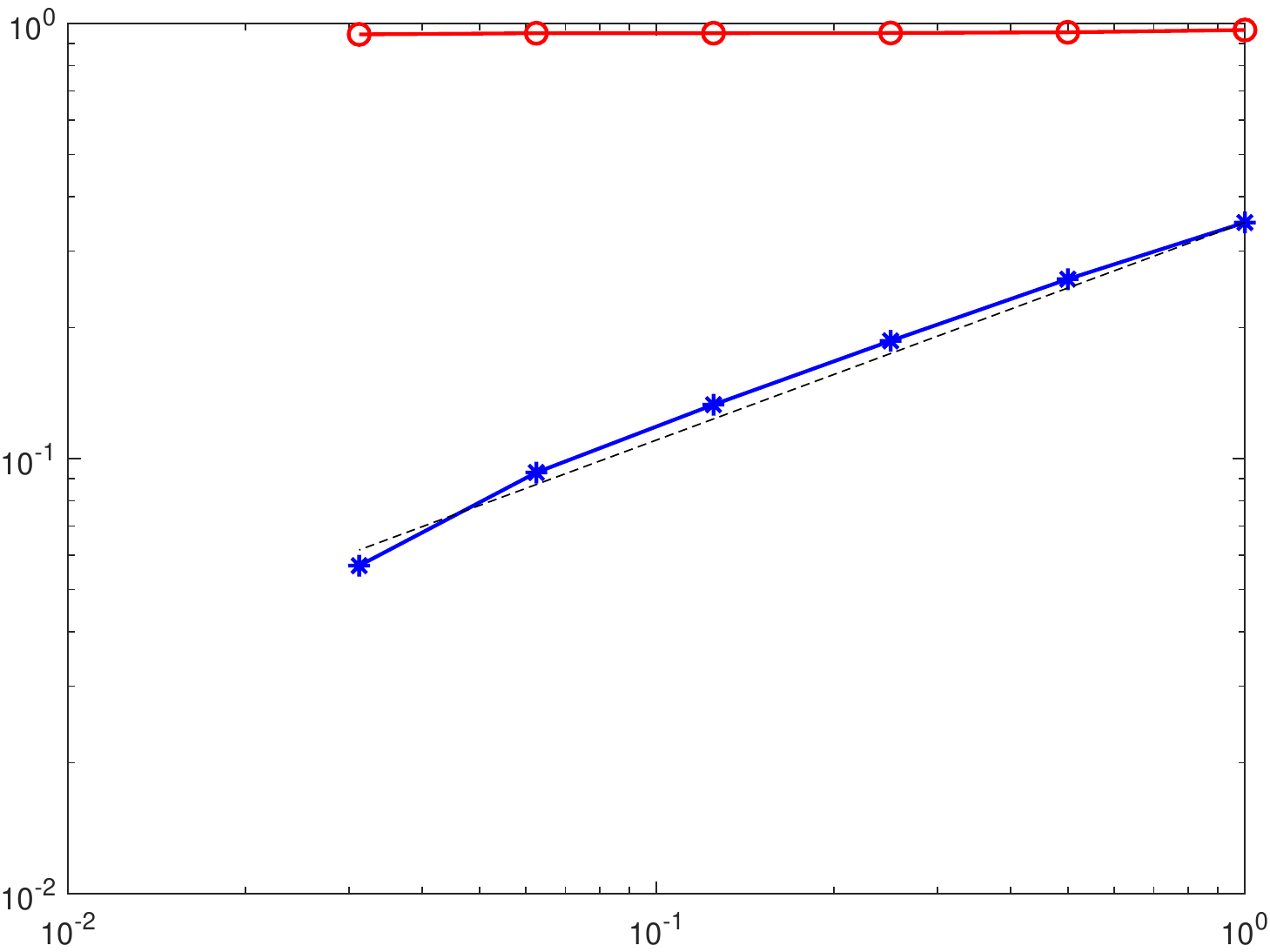}
	\caption{Relative errors for the LOD without localization (blue $\ast$) and the FEM (red $\circ$) approximations of Example 4 plotted against the mesh size $H$. The dashed line is $cH^{1/2}$.}\label{fig:Example4_2D}
\end{figure}

\begin{table}[h]
	\caption{Relative errors for the LOD approximation of Example 4 with and without correction for the source term.}\label{correction_table_2}
	\begin{tabular}{|l|c|c|}
		\hline
		& $H=\sqrt{2}\cdot 2^{-2}$, $m=2$  & $H=\sqrt{2}\cdot 2^{-3}$, $m=3$ \\ \hline
		LOD without correction & 0.186 & 0.134 \\ \hline
		Corrections for triangles at the boundary & 0.117 & 0.0964 \\ \hline
		Corrections for all triangles & $3.92\cdot 10^{-3}$ & $2.78 \cdot 10^{-3}$ \\ \hline
	\end{tabular}
\end{table}

Finally, we comment on the operator $\pi^{0,E}_H = \mathcal{I}^{0,E}_H \circ \pi^E_H$, see Section~\ref{sec:essential_bc}. This is the operator which puts all weights to zero for the edge-basis functions $\boldsymbol{\psi}_E$ corresponding to a boundary edge. In Figure~\ref{fig:Example4_bc}, we see that this approach still shows reduced convergence of order $\sqrt{H}$, which is similar to the current approach of just keeping $\pi^H_E$. We conclude that it is not enough to simply put all weights to zero to achieve the Falk-Winther projection with essential boundary conditions $\mathring\pi^E_H$ that is required for the results of \cite{Gallistl18} to hold true (i.e., a linear order convergence in $H$ independent of the normal trace of $\f$).

\begin{figure}[h]
	\centering
	\includegraphics[width=0.5\textwidth]{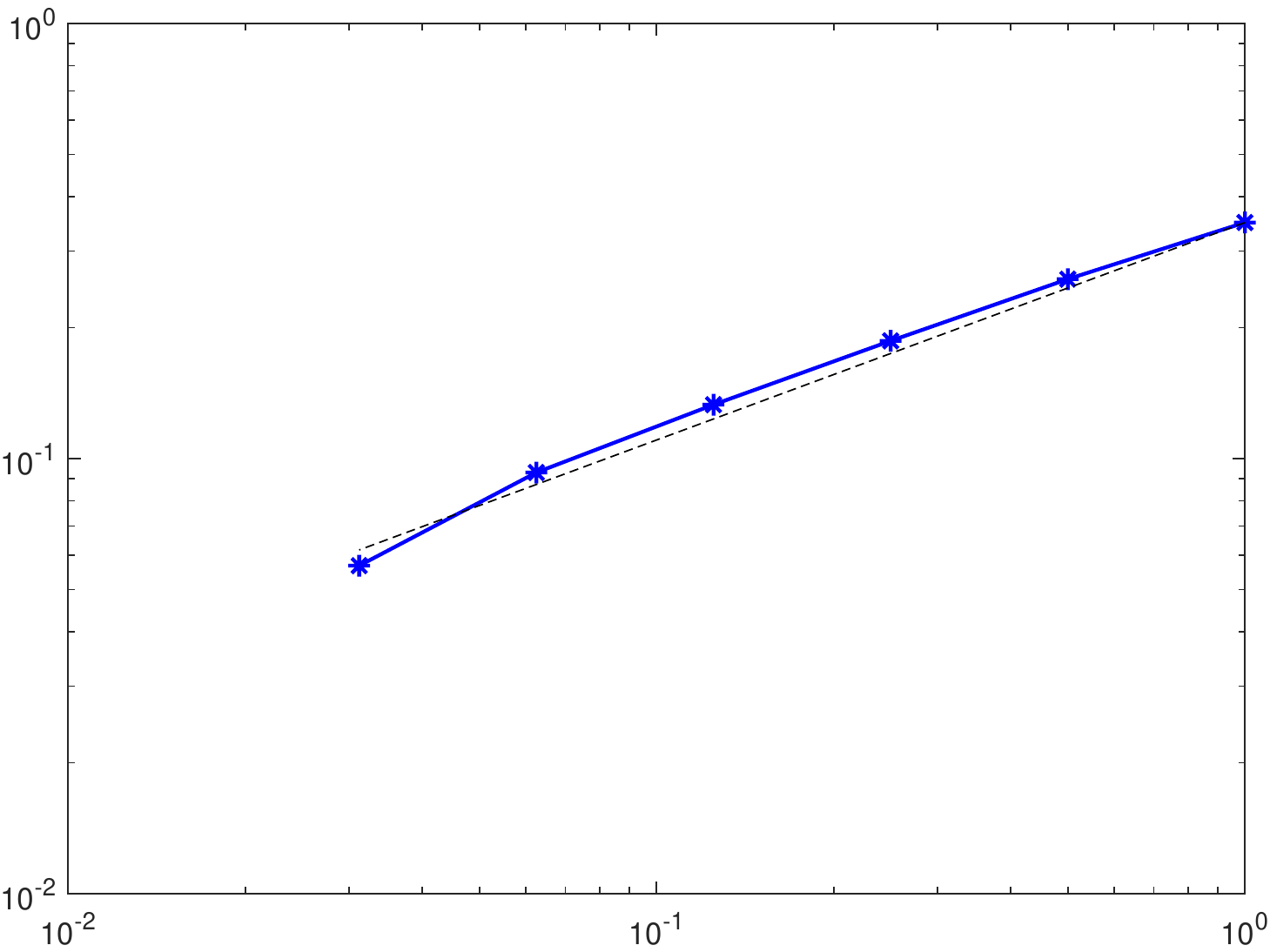}
	\caption{Relative errors for the LOD approximation of Example 4 without localization (blue $\ast$) based on $\pi^{0,E}_H$ plotted against the mesh size $H$. The dashed line is $cH^{1/2}$.}\label{fig:Example4_bc}
\end{figure}

\medskip
$\\$
{\bf Acknowledgements.}
The authors thank Barbara Verf\"urth for the many fruitful discussions on the general methodology and Joachim Sch\"oberl for the helpful and valuable comments on the Falk-Winther operator.

\appendix

\section{Construction of the Falk-Winther operator}
\label{section-falk-winther-construction}

In the following we describe the implementation of the Falk-Winther projection operator $\pi_H^E: \Hcurl \rightarrow \mathcal{N}(\TauH)$ as constructed in \cite{FalkWinther2015,FalkWinther2014}. Here we note that a discussion of its implementation can be already partially found in \cite{Gallistl18}, which however contains some minor mistakes, which are fixed in the following descriptions. Our implementation is discussed for the $3D$ case, but we also comment on what needs to be changed for the $2D$ case. 

Adapting the notation from the aforementioned references, we let $\Delta_0(\TauH)$ denote the set of vertices of $\mathcal{T}_H$ and $\Delta_1(\TauH)$ is the set of edges. The space $\mathcal{N}(\mathcal{T}_H)$ is spanned by the edge-oriented
basis functions $(\boldsymbol{\psi}_E)_{E\in\Delta_1(\TauH)}$ that are uniquely defined for any $E\in \Delta_1(\TauH)$
through the property
\begin{equation*}
\int_E \boldsymbol{\psi}_E\cdot \mathbf{t}_E\,ds = 1
\quad\text{and}\quad
\int_{E'} \boldsymbol{\psi}_E\cdot \mathbf{t}_E\,ds = 0
\quad\text{for all }E'\in\Delta_1(\TauH)\setminus\{E\}.
\end{equation*}
Here, $\mathbf{t}_E$ denotes the unit tangent to the edge $E$ with a globally
fixed sign. In the following we denote by $y_1(E)$ and $y_2(E)$ the endpoints of $E$, where we make the orientation convention that
$$
\mathbf{t}_E = (y_1(E)-y_2(E))/|E|, \qquad \mbox{where } |E|:=\operatorname{length}(E).
$$
The (edge-based) Falk-Winter projection $\pi_H^E:\Hcurl \to \mathcal{N}(\mathcal{T}_H)$ is of the form
\begin{equation}\label{e:R1def}
\pi_H^E \mathbf{u}
:=
S^1 \mathbf{u}
+
\sum_{E\in\Delta_1(\TauH)}
\int_E \big((\id -S^1)Q^1_E \mathbf{u}\big)\cdot \mathbf{t}_E\,ds
\,\boldsymbol{\psi}_E .
\end{equation}
Here, $S^1$ is constructed in such a way that $\pi_H^E$ commutes with the nodal Falk-Winther projection (i.e., $ \pi_H^E( \nabla v ) =  \nabla \pi_H^V( v )$ for all $v \in H^1(\Omega)$). However, since $S^1$ is not a projection, the second term involving the operator $Q^1_E$ needs to be introduced. This ensures that $\pi_H^E$ is invariant on $\mathcal{N}(\mathcal{T}_H)$. We shall describe the construction of the various constituents in the following subsections, where will also comment on their practical realization.

\subsection{Construction of $\pi_H^V$}
For a better understanding of the construction, we start with describing the nodal Falk-Winther projection $\pi_H^V : H^1(\Omega) \rightarrow \HLag$. For that, we denote for each vertex $y\in\Delta_0(\TauH)$ the associated nodal patch by
\begin{equation*}
\omega_y:=\Int\Big(\bigcup\{T\in\mathcal{T}_H : y\in T\}\Big).
\end{equation*}
With this, we also define the piecewise constant function $z^0_y$ by
\begin{equation}\label{def-z0y}
z^0_y = \begin{cases}
(\operatorname{meas}(\omega_{y}))^{-1} &\text{in } \omega_{y} \\
0                      &\text{in } \Omega\setminus\omega_{y}
\end{cases}
\end{equation}
Furthermore, the restriction of $\mathcal{S}(\mathcal{T}_H)$ to $\omega_y$ is denoted by  $\mathcal{S}(\mathcal{T}_H(\omega_y))$ (i.e., scalar-valued first-order Lagrange finite elements on $(\mathcal{T}_H)_{\vert \omega_y}$).

The construction of $\pi_H^V$ is simply based on a local $H^1$-projection on each patch $\omega_y$. More precisely, we let $Q_y^0 : H^1(\omega_y) \rightarrow \mathcal{S}(\mathcal{T}_H(\omega_y))$ be defined through the solution to a local Neumann problem, where $Q_y^0(v) \in \mathcal{S}(\mathcal{T}_H(\omega_y))$ solves
\begin{align*}
\int_{\omega_y} \nabla Q_y^0(v) \cdot \nabla w = \int_{\omega_y} \nabla v \cdot \nabla w \qquad \mbox{for all } 
w \in \mathcal{S}(\mathcal{T}_H(\omega_y))
\end{align*}
under the constraint $\int_{\omega_y}  Q_y^0(v) \hspace{2pt} dx = 0$. Since $Q_y^0$ does not preserve constants, we need to restore them through a local averaging operator $M^0 : L^2(\Omega) \rightarrow \HLag$, which is defined by
\begin{align*}
M^0 v := \sum_{y \in \triangle_0(\TauH)}  \left( \int_{\omega_y} z^0_y \hspace{2pt} v \hspace{2pt} dx \right) \lambda_y.
\end{align*}
Here, $\lambda_y \in \HLag$ is the standard nodal basis function (hat function) associated with the vertex $y$, i.e., it fulfills $\lambda_y(\tilde{y}) = \delta_{y \tilde{y}}$ for all $\tilde{y} \in \Delta_0(\TauH)$.

With this, the node-based Falk-Winther projection is given by
\begin{align*}
\pi_H^V(v) := M^0(v) + \sum_{y \in \triangle_0(\T_H)} (\hspace{1pt}Q_y^0(v)\hspace{1pt})(y) \hspace{3pt} \lambda_y.
\end{align*}
The operator is designed in the same way in $2D$. Though the construction of $\pi_H^V$ is not required for the construction of $\pi_H^E$, it can be helpful for validating the implementation through checking the commuting diagram property.

\subsection{Construction of $S^1$}
\label{subsection-construction-S1}
In the next step, we discuss the construction of $S^1$ in \eqref{e:R1def}. We start with defining the extended edge patch, for an edge $E\in \Delta_1(\TauH)$, as the union of the two nodal patches that are associated with the corners of the edge. More precisely, for an edge $E=\operatorname{conv}\{y_1,y_2\}\in\Delta_1(\TauH)$ with vertices (corners) $y_1,y_2\in\Delta_0(\TauH)$, the extended edge patch is given by
\begin{equation*}
\omega_E^{\mathit{ext}} := \omega_{y_1}\cup\omega_{y_2}. 
\end{equation*}
Furthermore, recalling the definition of $z_0^y$ from \eqref{def-z0y}, we define for any edge $E=\operatorname{conv}\{y_1,y_2\}\in\Delta_1(\TauH)$ with vertices
$y_1,y_2\in \Delta_0(\TauH)$ the piecewise constant function $(\delta z^0)_E \in L^2(\Omega)$ by
\begin{equation*}
(\delta z^0)_E := z^0_{y_1} - z^0_{y_2}.
\end{equation*}
Before we can proceed, we need some additional notation. For that, we let $\mathcal{T}_H(\omega_E^{\mathit{ext}})$ be the restriction of the mesh $\mathcal{T}_H$ to the extended edge-patch $\omega_E^{\mathit{ext}}$. We define the space of lowest-order N\'ed\'elec finite elements over $\mathcal{T}_H(\omega_E^{\mathit{ext}})$ and with a zero tangential trace on $\partial \omega_E^{\mathit{ext}}$ by
\begin{align*}
\mathring{\mathcal{N}}(\mathcal{T}_H(\omega_E^{\mathit{ext}})):=
\{ \bfv_H \in \mathcal{N}(\mathcal{T}_H(\omega_E^{\mathit{ext}})) | \hspace{3pt}
\bfv_H \times \mathbf{n} = 0 \mbox{ on } \partial \omega_E^{\mathit{ext}} \}.
\end{align*}
Similarly, we let $\mathring{\mathcal{RT}}(\mathcal{T}_H(\omega_E^{\mathit{ext}}))$ denote the space of lowest-order Raviart--Thomas finite elements over $\mathcal{T}_H(\omega_E^{\mathit{ext}})$ and with a zero normal trace on $\partial \omega_E^{\mathit{ext}}$, i.e., 
\begin{align*}
\mathring{\mathcal{RT}}(\mathcal{T}_H(\omega_E^{\mathit{ext}})):=
\{ \bfv_H \in \mathcal{RT}(\mathcal{T}_H(\omega_E^{\mathit{ext}})) | \hspace{3pt}
\bfv_H \cdot \mathbf{n} = 0 \mbox{ on } \partial \omega_E^{\mathit{ext}} \}.
\end{align*}
Given $E\in \Delta_1(\TauH)$, we search for a representation of $(\delta z^0)_{E}$ as the divergence of a $\mathring{\mathcal{RT}}$-function. More precisely, we seek for each $(\delta z^0)_{E}$ a function
$$
\mathbf{z}_E^1\in \mathring{\mathcal{RT}}(\mathcal{T}_H(\omega_E^{\mathit{ext}}))
$$
such that
\begin{align*}
\Div \mathbf{z}_E^1 =-(\delta z^0)_E  \qquad
\mbox{and} \qquad
(\mathbf{z}_E^1,\curl \boldsymbol{\tau}) &= 0 \quad \mbox{for all } \boldsymbol{\tau}\in \mathring{\mathcal{N}}(\mathcal{T}_H(\omega_E^{\mathit{ext}})).
\end{align*}
This problem is well-posed, since $(\delta z^0)_{E}$ is $\TauH$-piecewise constant with zero average over $\omega_E^{\mathit{ext}}$. Existence and uniqueness then follow from the exactness of the corresponding discrete complex with vanishing traces (cf. \cite{FalkWinther2014}). A motivation for the crucial role of $\mathbf{z}_E^1$ for the commuting property of the projection $\pi_H^E$ is given in Section \ref{subsection:commutation-S1-piE} below.

\begin{remark}
	In practice, $\mathbf{z}^1_E$ can be found by solving the following saddle point problem. Find $(\mathbf{z}^1_E, \mathbf{v}) \in \mathring{\mathcal{RT}}(\mathcal{T}_H(\omega_E^{\mathit{ext}})) \times \mathring{\mathcal{N}}(\mathcal{T}_H(\omega_E^{\mathit{ext}}))$ such that
	\begin{equation*}
	\begin{aligned}
	(\Div \mathbf{z}^1_E, \Div \mathbf{v}) + (\mathbf{w}, \curl \mathbf{v}) &= (-(\delta z^0)_E, \Div \mathbf{w})
	\quad&&\text{for all } \mathbf{w}\in \mathring{\mathcal{RT}}(\mathcal{T}_H(\omega_E^{\mathit{ext}}))
	\\
	(\mathbf{z}^1_E, \curl \boldsymbol{\tau}) & = 0 && \text{for all } \boldsymbol{\tau}\in \mathring{\mathcal{N}}(\mathcal{T}_H(\omega_E^{\mathit{ext}})).
	\end{aligned}
	\end{equation*}
\end{remark}

Using the functions $\mathbf{z}_E^1$, we define the operator $M^1:L^2(\Omega;\C^3)\to\mathcal{N}(\mathcal{T}_H)$ that maps any function $\mathbf{u}\in L^2(\Omega;\C^3)$ to
\begin{equation*}
M^1\mathbf{u} :=
\sum_{E\in\Delta_1(\TauH)}
\int_{\omega_E^{\mathit{ext}}} \mathbf{u}\cdot \mathbf{z}_E^1\,dx\, \boldsymbol{\psi}_E.
\end{equation*}

As a last constituent of $S^1$, we define the operator 
$$
Q^1_{y,-} : \mathbf{H}(\curl,\omega_y)
\to 
\mathcal{S}(\mathcal{T}_H(\omega_y))
$$ 
again through the solution of a local discrete Neumann problem. More precisely, for $\mathbf{u}\in \mathbf{H}(\curl, \omega_y )$, the image $ Q^1_{y,-} \mathbf{u} \in \mathcal{S}(\mathcal{T}_H(\omega_y))$ is the solution to
\begin{equation*}
\begin{aligned}
(\mathbf{u}-\nabla Q^1_{y,-} \mathbf{u},\nabla v) &= 0
\quad&&\text{for all } v\in \mathcal{S}(\mathcal{T}_H(\omega_y))
\\
\int_{\omega_y} Q^1_{y,-} \mathbf{u}\,dx & = 0. &&
\end{aligned}
\end{equation*}
Note the close relation of $Q^1_{y,-}$ to the operator $Q^0_y$ from Section \ref{subsection-construction-S1}.

With all the parts available, we can define the operator 
$S^1:\mathbf{H}_0(\curl,\Omega)\to \mathring{\mathcal{N}}(\mathcal{T}_H)$
via
\begin{equation}\label{e:S1def1}
S^1 \mathbf{u} :=
M^1 \mathbf{u} +
\sum_{y\in\Delta_0(\TauH)} (Q^1_{y,-}\mathbf{u})(y)\nabla \lambda_y .
\end{equation}
From a practical point of view, it is useful to rewrite the operator $S^1$ in terms of the edge-based basis functions $\boldsymbol{\psi}_E$ of $\mathcal{N}(\TauH)$. Given a vertex 
$y\in\Delta_0(\TauH)$, we can expand $\lambda_y$ in terms of the basis functions
$(\boldsymbol{\psi}_E)_{E\in\Delta_1(\mathcal{T}_H)}$. We obtain
\begin{equation*}
\nabla\lambda_z
=
\sum_{E\in\Delta_1(\mathcal{T}_H)} \int_E \nabla\lambda_z\cdot \mathbf{t}_E\,ds\,\boldsymbol{\psi}_E
=
\sum_{E\in\Delta_1(z)} \text{sign}(\mathbf{t}_E \cdot \nabla \lambda_z) \hspace{2pt} \boldsymbol{\psi}_E
\end{equation*}
where $\Delta_1(z)\subseteq \Delta_1(\mathcal{T}_H)$
is the set of all edges that contain $z$. Thus, we can rewrite the operator $S^1$ from \eqref{e:S1def1} as
\begin{equation}\label{e:S1def2}
S^1 \mathbf{u} :=
M^1 \mathbf{u} +
\sum_{E\in\Delta_1(\TauH)}
\big((Q^1_{y_1(E),-}\mathbf{u})(y_1(E)) - (Q^1_{y_2(E),-}\mathbf{u})(y_2(E))\big) \boldsymbol{\psi}_E.
\end{equation}
At this point we have the commuting property, but $S^1$ is not a projection yet.

\begin{remark}\label{S_assemble}
	For the practical realization of the operator $S^1$, we seek a matrix representation, $S$, of $S^1: \mathcal{N}(\mathcal{T}_h) \rightarrow \mathcal{N}(\TauH)$, where $\mathcal{T}_h$ is a refinement of $\TauH$ that resolves the multiscale variations of the problem.
	For a given edge $E \in \Delta_1(\TauH)$, define the vector $M_E$
	\begin{equation*}
	M_E := \left[\int_{\omega_E^{\mathit{ext}}} \mathbf{z}^1_E \cdot \boldsymbol{\psi}_{e} \,dx\,\right]_{e \in \mathcal{T}_h(\omega_E^{\mathit{ext}})}.
	\end{equation*}
	To compute $Q^1_{y,-}$, the mean constraint is enforced by a classical Lagrange multiplier, which leads to a saddle point system. For a node $y \in \Delta_0(\TauH)$ define the matrices 
	\begin{equation*}
	A_y := \left[\int_{\omega_y} \nabla \lambda_z \cdot \nabla \lambda_w \,dx\, \right]_{z,w \in \Delta_0(\TauH(\omega_y))} \quad B_y:= \left[ \int_{\omega_y} \nabla \lambda_z \cdot \boldsymbol{\psi}_e  \,dx\,\right]_{\begin{subarray}{l} z\in \Delta_0(\TauH(\omega_y)) \\e \in \Delta_1(\mathcal{T}_h(\omega_y)) \end{subarray}}
	\end{equation*}
	and the vector
	\begin{equation*}
	C_y = \left[\int_{\omega_y} \lambda_z \,dx\,\right]_{z \in \Delta_0(\TauH(\omega_y))}.
	\end{equation*} 
	Solve the following system for $Q_y$
	\begin{equation*}
	\begin{bmatrix}
	A_y & C^\ast_y \\ C_y & 0
	\end{bmatrix}
	\begin{bmatrix}
	Q_y \\ V_y
	\end{bmatrix}
	=
	\begin{bmatrix}
	B_y \\ 0
	\end{bmatrix}
	\end{equation*}
	and let $Q_j = Q_y\left[y_j(E),:\right]$, for $j=1,2$. Finally, the matrix representation of $S^1$ can be expressed as
	\begin{equation*}
	S(E,[e_1,...,e_N]) = M_E + Q_1 - Q_2,
	\end{equation*}
	where $e_1,....,e_N$ denotes the edges in the patch $\mathcal{T}_h(\omega_E^{\mathit{ext}})$.
\end{remark}

\begin{remark}
	In the $2D$ case, the computation of $z^1_E$ needs to be slightly changed. The lowest-order N\'ed\'elec space with zero tangential trace on $\partial \omega_E^{\mathit{ext}}$, $\mathring{\mathcal{N}}(\mathcal{T}_H(\omega_E^{\mathit{ext}}))$, should be replaced with the first-order (scalar-valued) Lagrange finite element space with zero trace on $\partial \omega_E^{\mathit{ext}}$, that is,
	\begin{equation*}
	\mathring{\mathcal{S}}(\mathcal{T}_H(\omega_E^{\mathit{ext}})) := \{v \in \mathcal{S}(\TauH)| \hspace{3pt} v = 0 \mbox{ on } \partial \omega_E^{\mathit{ext}}\}.
	\end{equation*} 
	Except for $\curl$ having a different meaning in $2D$, this is the only change in the interpolation $\pi^E_H$ that needs to be made for the $2D$ case.
\end{remark}

\subsection{Commuting property $\nabla \circ \pi_H^V = S^1 \circ \nabla$}
\label{subsection:commutation-S1-piE}

For a better understanding of the construction of $S^1$, we will demonstrate that it indeed implies the commuting property
$$\nabla \pi_H^V(v) = S^1(\nabla v) \qquad \mbox{for all } v \in H^1(\Omega).
$$
We start with applying $S^1$ to a gradient $\nabla v$ for some $v \in H^1(\Omega)$. First, we easily observe
$$
Q^1_{y,-}(\nabla v) = Q_y^0(v).
$$
Next, for an edge $E=\{y_1,y_2\}$ we consider the tangential component of $\nabla M^0(v)$ (i.e., the local degree of freedom) which is given by
\begin{eqnarray*}
	\lefteqn{\int_E   \nabla M^0(v) \cdot \mathbf{t}_E\,ds = |E|^{-1} \int_{E} \nabla M^0(v) \cdot (y_1 - y_2)} \\
	&=&
	|E|^{-1} \int_{E} \left( \left( \int_{\omega_{y_1}} z^0_{y_1} \hspace{2pt} v \hspace{2pt} dx \right) \nabla \lambda_{y_1}
	+ \left( \int_{\omega_{y_2}} z^0_{y_2} \hspace{2pt} v \hspace{2pt} dx \right) \nabla \lambda_{y_2}
	\right) \cdot (y_1 - y_2) \\
	&=& |E|^{-1} \int_{E}  \left( \int_{\omega_{y_1}} z^0_{y_1} \hspace{2pt} v \hspace{2pt} dx -  \int_{\omega_{y_2}} z^0_{y_2} \hspace{2pt} v \hspace{2pt} dx \right) \nabla \lambda_{y_1}
	\cdot (y_1 - y_2) \\
	&=& \left( \int_{\omega_{y_1}} z^0_{y_1} \hspace{2pt} v \hspace{2pt} dx -  \int_{\omega_{y_2}} z^0_{y_2} \hspace{2pt} v \hspace{2pt} dx \right) \\
	&=&  \int_{\omega_E^{\mathit{ext}}} (\delta_z^0)_E \hspace{2pt} v \hspace{2pt} dx = - \int_{\omega_E^{\mathit{ext}}} \Div \mathbf{z}_E^1   \hspace{3pt} v \hspace{2pt} dx = \int_{\omega_E^{\mathit{ext}}}  \mathbf{z}_E^1   \hspace{2pt} \nabla v \hspace{2pt} dx.
\end{eqnarray*}
Since $\nabla M^0(v) \in \mathcal{N}(\mathcal{T}_H)$ is uniquely determined by these degrees of freedom, we have
\begin{align*}
\nabla M^0(v) = \sum_{E \in \Delta_1(\TauH)} \left( \int_{\omega_E^{\mathit{ext}}}  \mathbf{z}_E^1   \hspace{2pt} \nabla v \hspace{2pt} dx \right) \psi_{E} = M^1(\nabla v).
\end{align*}
Combining the equations $Q^1_{y,-}(\nabla v) = Q_z^0(v)$ and $\nabla M^0(v) =M^1(\nabla v)$ for all $v\in H^1(\Omega)$, we obtain
\begin{align*}
\nabla \pi_H^V(v) = \nabla M^0(v) + \sum_{z \in \mathring{\triangle}_0(\T_H)} (Q_z^0 v)(z) \hspace{3pt} \nabla \lambda_z
= M^1(\nabla v) 
+ \sum_{y \in \mathring{\triangle}_0(\T_H)} Q^1_{y,-}(\nabla v)(y) \hspace{3pt} \nabla \lambda_y
\overset{\eqref{e:S1def1}}{=} S^1(\nabla v)
\end{align*}
as desired. Observe that it can be helpful for the validation of the Falk-Winther implementation to check the commuting property $\nabla \pi_H^V v = S^1(\nabla v)$ numerically.

\subsection{Construction of $Q^1_E$}
As mentioned before, the operator $S^1$ commutes with vertex-based Falk-Winther operator $\pi_H^V$, but it is not a projection yet. In order to modify $S^1$ accordingly, another operator $$ 
Q^1_E:
\mathbf{H}(\curl, \omega_E^{\mathit{ext}})
\to
\mathcal{N}(\mathcal{T}_H(\omega_E^{\mathit{ext}}))
$$
needs to be introduced for all edges $E\in \Delta_1(\TauH)$. Given  an edge $E$ and some some $\mathbf{u}\in \mathbf{H}(\curl, \omega_E^{\mathit{ext}})$ the image $Q^1_E(\bfu) \in \mathcal{N}(\mathcal{T}_H(\omega_E^{\mathit{ext}}))$ is defined by the system
\begin{equation*}
\begin{aligned}
(\mathbf{u}-Q^1_E \mathbf{u}, \nabla \tau) &= 0 \quad 
&&\text{for all } \tau\in \mathcal{S}(\mathcal{T}_H(\omega_E^{\mathit{ext}}))
\\
(\curl (\mathbf{u}-Q^1_E \mathbf{u}),\curl \mathbf{v}) &=0
&&\text{for all } \mathbf{v}\in \mathcal{N}(\mathcal{T}_H(\omega_E^{\mathit{ext}})).
\end{aligned}
\end{equation*}
Again, existence and uniqueness follow from the exactness of the discrete complex (cf. \cite{ArnoldFalkWinther2006,ArnoldFalkWinther2010,FalkWinther2014}).

With $Q^1_E$ available, it is now possible to compute the edge-based Falk-Winther projection according to \eqref{e:R1def} by
\begin{align*}
\pi_H^E \mathbf{u}
=
S^1 \mathbf{u}
+
\sum_{E\in\Delta_1(\TauH)}
\int_E \big((\id -S^1)Q^1_E \mathbf{u}\big)\cdot \mathbf{t}_E\,ds
\,\boldsymbol{\psi}_E ,
\end{align*}
where $I$ denotes the identity operator.

\begin{remark}
	Similarly to the $S^1$ operator in Remark~\ref{S_assemble}, we seek a matrix representation $Q_E$ of $Q^1_E: \mathcal{N}(\mathcal{T}_h) \rightarrow \mathcal{N}(\TauH)$ for the implementation. Given an edge $E\in \TauH$, assemble the matrices
	\begin{alignat*}{2}
	A_E &:= \left[\int_{\omega_E^{\mathit{ext}}} \curl \boldsymbol{\psi}_E \cdot \curl \boldsymbol{\psi}_{\hat E} \,dx\, \right]_{E,\hat E \in \Delta_1(\TauH(\omega_E^{\mathit{ext}}))} \quad && B_E:=\left[\int_{\omega_E^{\mathit{ext}}} \boldsymbol{\psi}_E \cdot \nabla \lambda_y \,dx\, \right]_{\begin{subarray}{l} E\in \Delta_1(\TauH(\omega_E^{\mathit{ext}})) \\y \in \Delta_0(\TauH(\omega_E^{\mathit{ext}})) \end{subarray}}\\
	C_E &:= \left[\int_{\omega_E^{\mathit{ext}}} \curl \boldsymbol{\psi}_E \cdot \curl \boldsymbol{\psi}_{e} \,dx\, \right]_{\begin{subarray}{l}E \in \Delta_1(\TauH(\omega_E^{\mathit{ext}})) \\ e \in \Delta_1(\mathcal{T}_h(\omega_E^{\mathit{ext}}))\end{subarray}}  && D_E:=\left[\int_{\omega_E^{\mathit{ext}}} \boldsymbol{\psi}_e \cdot \nabla \lambda_y \,dx\, \right]_{\begin{subarray}{l} e\in \Delta_1(\mathcal{T}_h(\omega_E^{\mathit{ext}})) \\y \in \Delta_0(\TauH(\omega_E^{\mathit{ext}})) \end{subarray}}
	\end{alignat*}
	and solve the following system for $Q_E$
	\begin{equation*}
	\begin{bmatrix}
	A_E & B^\ast_E \\ B_E & 0
	\end{bmatrix}
	\begin{bmatrix}
	Q_E \\ V_E
	\end{bmatrix}
	=
	\begin{bmatrix}
	C_E \\ D_E
	\end{bmatrix}.
	\end{equation*}
	This gives us $Q_E$. To achieve a matrix representation, here denoted by $P$, of the Falk-Winther projection $\pi^E_H:\mathcal{N}(\mathcal{T}_h) \rightarrow \mathcal{N}(\TauH)$, we restrict the matrix $S$ to edge patch $\omega_E^{\mathit{ext}}$ and denote this matrix by $S_{\omega_E^{\mathit{ext}}}$. Thus, $P$ can be computed by
	\begin{align*}
	P(E,[e_1,...,e_N]) = S(E,[e_1,...,e_N]) + (Q_E- S_{\omega_E^{\mathit{ext}}}Q_E)(E,:).
	\end{align*} 
	Note that $P$ can be assembled by looping over all the edges in the coarse mesh $\TauH$ and all computations are restricted to either the edge patch $\omega_E^{\mathit{ext}}$ or the nodal patch $\omega_y$. This means that the assembly of $P$ can be made efficiently.
\end{remark}

\end{document}